\documentclass[a4paper]{amsart}
\usepackage[utf8]{inputenc}
\usepackage[english]{babel}

\usepackage{amscd,amssymb}
\usepackage{amsmath}
\usepackage{amsthm}
\usepackage{graphicx}
\usepackage{fancybox}
\usepackage{epic,eepic}
\usepackage{amstext}
\usepackage{mathtools}
\usepackage{esint}
\usepackage[square,numbers]{natbib}
\usepackage{booktabs}
\usepackage[hide links]{hyperref}
\usepackage{comment}
\usepackage{mathtools}
\usepackage{tikz,pgfplots}
\usepackage{subcaption}

\usepackage[noabbrev, capitalise, nameinlink]{cleveref}
\crefname{equation}{}{}
\crefname{assumption}{Assumption}{Assumptions}
\crefformat{equation}{\textup{#2(#1)#3}}

\newtheorem{theorem}{Theorem}[section]

\newtheorem{lemma}[theorem]{Lemma}

\theoremstyle{definition}

\theoremstyle{remark}
\newtheorem{remark}[theorem]{Remark}
\numberwithin{theorem}{section}
\numberwithin{equation}{section}
\numberwithin{figure}{section}

\newcommand{\fraka}{\mathfrak{a}}
\newcommand{\frakb}{\mathfrak{b}}

\def\R{\mathbb{R}}

\def\XXint#1#2#3{{\setbox0=\hbox{$#1{#2#3}{\int}$ }
\vcenter{\hbox{$#2#3$ }}\kern-.6\wd0}}

\def\Nb{\mathsf{N}}
\def\TH{\mathcal T_H}

\def\fraka{\mathfrak a}
\def\frakb{\mathfrak b}
\def\with{\,:\,}
\def\ds{\,\mathrm{d}s}

\numberwithin{equation}{section}
\numberwithin{theorem}{section}

\AtBeginDocument{%
	\def\MR#1{}
}

\allowdisplaybreaks
\title[A higher-order multiscale method for nondivergence-form PDEs]{A post-processed higher-order multiscale method for nondivergence-form elliptic equations}
\author[M.~Hauck, R.~Maier, T.~Sprekeler]{Moritz Hauck$^*$, Roland Maier$^*$, Timo Sprekeler$^\dagger$}
\address{${}^*$ Institute for Applied and Numerical Mathematics, Karlsruhe Institute of Technology, Englerstr.~2, 76131 Karlsruhe, Germany}
\email{\{moritz.hauck,roland.maier\}@kit.edu}
\address{${}^{\dagger}$ Department of Mathematics, Texas A\&M University, College Station, TX 77843, USA}
\email{timo.sprekeler@tamu.edu}
 \date{\today}

\begin{document}
\begin{abstract}
We study the finite element approximation of linear second-order elliptic partial differential equations in nondivergence form with highly heterogeneous diffusion and drift coefficients. A generalized Cordes condition is imposed to guarantee that a suitably renormalized version of the nondivergence-form differential operator is near the Laplacian. Based on a stabilized symmetric formulation for the gradient that enables the use of $H^1$-conforming approximation spaces, we construct a multiscale method following the methodology of the localized orthogonal decomposition with coarse basis functions tailored to the heterogeneous coefficients. We employ a novel post-processing strategy to obtain higher-order convergence rates, overcoming previous limitations imposed by the low regularity of the load functional. Numerical experiments demonstrate the performance of the method.
\end{abstract}

\keywords{nondivergence-form PDE, Cordes condition, a~priori error analysis, multiscale methods, localized orthogonal decomposition}

\subjclass{
	65N12, 
	65N15, 
	65N30} 
\maketitle

\section{Introduction}
On a bounded convex polyhedral domain $\Omega \subset \R^d$ in dimension $d\in \{2,3\}$, we consider the numerical approximation of the homogeneous Dirichlet problem
\begin{align}\label{u prob intro}
\left\{
\begin{aligned}
    Lu &= f\quad\text{in }\Omega,\\
    u &= 0\quad\text{on }\partial\Omega,
\end{aligned}
\right.
\end{align}
where $f\in L^2(\Omega)$ is a given source term and $L$ is a linear second-order differential operator in nondivergence form defined by
\begin{align*}
  Lv \coloneqq A:D^2 v  + b\cdot \nabla v = \sum_{i,j=1}^d a_{ij} \,\partial^2_{ij} v + \sum_{k=1}^d b_k\,\partial_k v,
\end{align*}
involving a uniformly elliptic diffusion coefficient $A = (a_{ij})_{1\leq i,j\leq d}\in L^{\infty}(\Omega;\R^{d\times d}_{\mathrm{sym}})$ and a drift coefficient $b= (b_k)_{1\leq k\leq d}\in L^{\infty}(\Omega;\R^d)$. Our focus is on regimes where the coefficients are multiscale, exhibiting strong oscillations across multiple, nonseparated scales. Motivated by the fact that, in the absence of any further structural assumptions, the map $L\colon H^2(\Omega)\cap H^1_0(\Omega)\rightarrow L^2(\Omega)$ is not bijective in general, we impose the following Cordes-type condition:
\begin{align}\label{C-type}
    \exists\, \delta\in \left( \frac{\eta}{1+C_{\mathrm{P}}^2}\,,\,1\right]\colon\quad \frac{A:A+b\cdot b}{(\mathrm{tr}(A))^2} \leq \frac{1}{d-1+\delta}\quad\text{a.e. in }\Omega,
\end{align}
where the constant $C_{\mathrm{P}}\coloneqq  \inf_{\varphi\in V\backslash\{0\}} \|D\varphi\|_{L^2(\Omega)}/\|\varphi\|_{L^2(\Omega)}>0$ is the optimal Poincar\'{e} constant for the subspace $V\coloneqq H^1_t(\Omega;\R^d)$ of $H^1(\Omega;\R^d)$ consisting of functions with vanishing tangential trace on $\partial\Omega$, and the constant $\eta\in \{0,1\}$ is given by $\eta \coloneqq 0$ if $b = 0$ almost everywhere in $\Omega$, and $\eta \coloneqq 1$ otherwise. 

Condition \eqref{C-type} generalizes the classical Cordes condition for operators of the form $v\mapsto A:D^2v$ (see \cite{Cor56}), and is inspired by the Cordes-type condition
\begin{align}\label{Cordes Abc intro}
\exists\, (\epsilon,l)\in \left(0,1\right]\times (0,\infty):\qquad\frac{A:A + \frac{1}{2l}(b\cdot b) + \frac{1}{l^2}c^2}{(\mathrm{tr}(A)+\frac{1}{l}c)^2} \leq \frac{1}{d+\epsilon}\quad\text{a.e. in }\Omega
\end{align} 
for operators of the form $v\mapsto A:D^2 v+b\cdot \nabla v- cv$ (see \cite{SS14}). The need for a new condition comes from the observation that \eqref{Cordes Abc intro} cannot be satisfied when $c = 0$ almost everywhere in $\Omega$. In terms of Campanato's notion of near operators \cite{Cam94}, the Cordes-type condition \eqref{C-type} ensures that a suitably renormalized version of $L$ is near the Laplacian, which in turn implies existence and uniqueness of a strong solution $u\in H^2(\Omega)\cap H^1_0(\Omega)$ to problem \eqref{u prob intro}.

The goal of this work is the construction and rigorous error analysis of a method for the accurate numerical approximation of the multiscale problem \eqref{u prob intro}, a task often referred to as numerical homogenization. We follow the methodology of localized orthogonal decomposition (LOD) \cite{MalP14,Mlqvist2020,Altmann2021}, using the observation that $z = \nabla u$ is the unique element in $V$ such that
\begin{align}\label{z prob intro}
  \forall \varphi\in V\colon \qquad  (\tilde L z,\tilde L \varphi)_{L^2(\Omega)} + \sigma (\mathrm{rot}(z),\mathrm{rot}(\varphi))_{L^2(\Omega)} = (f,\tilde L \varphi)_{L^2(\Omega)}
\end{align}
with $\tilde L \varphi \coloneqq A:D\varphi + b\cdot \varphi$ for $\varphi\in V$, and $\sigma>0$ a chosen constant. Inspired by~\cite{Gal17b,Gal19}, this formulation has the important advantage that it is symmetric and allows for a discretization based on classical $H^1$-conforming finite element spaces. In particular, we can avoid the more technical implementation of $H^2$-conforming discretizations; see, e.g., \cite{KM19}. Once a practical multiscale approximation of $z$ has been obtained, the function $u$ can be easily recovered by a computationally cheap finite element approximation of a Poisson problem on a coarse mesh.

For recent developments in homogenization of nondivergence-form equations, we refer the reader to \cite{GTY20,ST21,Spr24,GST25} for periodic homogenization and \cite{AFL22,GT24,GT25,GSTT25} for stochastic homogenization. Various numerical schemes for the approximation of effective diffusion matrices in periodic homogenization of nondivergence-form equations have been proposed in recent years; see \cite{CSS20,SSZ25} for finite element methods, \cite{FO09} for a finite difference method, and \cite{SWZ25} for a Lagrangian method. Linear elliptic equations in nondivergence-form arise as linearizations of fully nonlinear second-order Hamilton--Jacobi--Bellman and Isaacs equations. Numerical schemes for the approximation of effective Hamiltonians arising in the periodic homogenization of such problems have recently been developed; see \cite{GSS21,KS22,QST24} for finite element methods and \cite{CM09,FO18,FOO18} for finite difference methods. 

The sole reference for the numerical homogenization of nondivergence-form equations with arbitrarily rough coefficients beyond periodicity and scale separation is the $H^2$-conforming LOD method proposed in \cite{FGP24}, which is based on a nonsymmetric formulation and provides an error bound in the $H^1$-norm only. In contrast, the method developed in this work does not require $H^2$-conforming elements, is based on a symmetric formulation, allows for higher-order approximation, and admits provable error bounds for the approximation of second-order derivatives of $u$ in the $L^2$-norm.
We employ the localization strategy proposed in \cite{Hauck2026}, eliminating the undesirable numerical effects observed in \cite{FGP24} that degrade approximation quality for fixed localization parameters. Further, we employ a post-processing step, which enables the extraction of high-order convergence rates in the spirit of higher-order LOD approaches; see~\cite{Maier2021,Dong2023}. We incorporate ideas from the collocation version of the Super-LOD method introduced in \cite{SLOD} as well as strategies for additional corrections as analyzed in~\cite{HellmanMalqvist2019,KalKMW25}. 
Similar constructions for the functionals that implicitly define the LOD approximation space, often called quantities of interest, have also been investigated in \cite{HHM16, Hauck2025HOStokes, HL25, HLS26} for mixed formulations of divergence-form problems.

This paper is structured as follows.
In \cref{Sec: 2}, we show the existence and uniqueness of a strong solution $u$ to \eqref{u prob intro} under the Cordes-type condition \eqref{C-type}, analyze the variational problem \eqref{z prob intro} for $\nabla u$, and discuss the recovery of $u$. In \cref{Sec: 3}, we introduce and rigorously analyze an ideal numerical homogenization method that achieves optimal-order approximations without pre-asymptotic effects. In \cref{sec:decay}, we prove exponential decay properties for a problem-adapted orthogonal projection $\mathcal R$ onto the ideal multiscale space, as well as of related operators. These results are then used in \cref{sec:localization} to localize the operator $\mathcal R$. In  \cref{sec:pracmethod}, we introduce and rigorously analyze a practical numerical homogenization method based on locally computed basis functions, justified by the exponential decay properties and localization obtained in the previous sections. Finally, in \cref{sec:numexp}, we provide various numerical experiments that illustrate the theoretical results.

\subsection*{Notation} 
Throughout this work, we use the notation \( a \lesssim b \) (respectively \( a \gtrsim b \)) to indicate that \( a \leq C\, b \) (respectively \( a \geq C\, b \)), where \( C > 0 \) denotes a generic constant independent of the coarse mesh size \( H \), the localization parameter \( \ell \), and the oscillatory behavior of the  solution~\( u \). The constant \( C \) may depend on the mesh regularity, the spatial dimension \( d \), the ellipticity constants \( \nu_1, \nu_2 \), the $L^\infty(\Omega)$-bound of $b$, the Cordes parameter $\delta$, the stabilization parameter $\sigma$, and the post-processing order \( p \).
We employ standard notation for Lebesgue and Sobolev spaces and their associated norms. For norms defined on the entire domain \( \Omega \), such as the \( L^p(\Omega) \)-norm, we write \( \|\cdot\|_{L^p} \), omitting the domain from the notation. The domain is specified explicitly only when referring to proper subdomains of \( \Omega \).
The gradient and the Hessian of a scalar-valued function $u$ are denoted by $\nabla u$ and $D^2 u$, respectively. The Jacobian of a vector-valued function $v$ is denoted by $Dv$.
Finally, we write $x\cdot y\coloneqq x^{\mathrm{T}}y$ for $x,y\in \R^d$, and $X:Y \coloneqq \mathrm{tr}(X^{\mathrm{T}}Y)$ for $X,Y\in \R^{d\times d}$. 

\section{Model problem}\label{Sec: 2}

Let $\Omega \subset \R^d$ be a bounded convex domain in dimension $d\in \{2,3\}$. We consider the elliptic nondivergence-form problem
\begin{align}\label{u prob}
\left\{
\begin{aligned}
    A:D^2 u + b\cdot \nabla u &= f\quad\text{in }\Omega,\\
    u &= 0\quad\text{on }\partial\Omega.
\end{aligned}
\right.
\end{align}
The data are specified as follows: $f \in L^2(\Omega)$ is a source term, $b \in L^{\infty}(\Omega;\mathbb{R}^d)$ is a drift coefficient, and $A \in L^{\infty}(\Omega;\mathbb{R}^{d \times d}_{\mathrm{sym}})$ is a uniformly elliptic diffusion coefficient, i.e.,
\begin{align}\label{unifm ell}
    \exists\,\nu_1,\nu_2>0\text{ s.t. } \forall \xi\in \R^d\backslash\{0\} \colon \quad
    \nu_1 \leq \frac{ (A\xi)\cdot \xi}{\xi\cdot\xi} \leq \nu_2 \; \text{ a.e. in }\Omega.
\end{align} 
Additionally, we assume that the pair $(A,b)$ satisfies the Cordes-type condition
\begin{align}\label{Cordes}
    \exists\, \delta\in (\delta_0,1]\colon\quad \frac{A:A+b\cdot b}{(\mathrm{tr}(A))^2} \leq \frac{1}{d-1+\delta}\quad\text{a.e. in }\Omega.
\end{align}
Here, the lower bound of admissible values for $\delta$ is given by
\begin{align}\label{d0 and eta}
    \delta_0 \coloneqq \frac{\eta}{1+C_\mathrm{P}^2},\qquad\eta\coloneqq \begin{cases} 
1 &\text{ if }\|b\|_{L^{\infty}} \neq 0,\\
0&\text{ otherwise},
    \end{cases}
\end{align}
where $C_\mathrm{P}$ is the Poincar\'{e} constant
\begin{align}\label{Cp}
  C_\mathrm{P}\coloneqq  \inf_{\varphi\in V\backslash\{0\}} \frac{\|D\varphi\|_{L^2}}{\|\varphi\|_{L^2}} > 0
\end{align}
for the subspace $V$ of $H^1(\Omega;\mathbb{R}^d)$ consisting of functions with vanishing tangential trace on $\partial\Omega$, that is,
\begin{align}\label{eq:V}
V \coloneqq H^1_t(\Omega;\R^d) \coloneqq \big\{ \varphi \in H^1(\Omega;\mathbb{R}^d)\  :\  \varphi-(\varphi\cdot n)n = 0 \text{ on } \partial\Omega \big\},
\end{align}
where $n$ denotes the exterior unit normal on $\partial\Omega$; see, e.g., \cite{Bis88}.

It is important to note that no regularity assumptions beyond $L^\infty$ are imposed on the
coefficients $A$ and $b$. The scenario of particular interest here is when the coefficients are rough, with oscillations across multiple length scales.

\subsection{Well-posedness}\label{Sec: 2.1}

In the absence of a drift term \(b\), condition~\eqref{Cordes} reduces to the classical Cordes condition introduced in \cite{Cor56}, which in two dimensions is already implied by the uniform ellipticity condition~\eqref{unifm ell}. In this case, it is well-known (cf.~\cite{Tal65,SS13}) that problem~\eqref{u prob} admits a unique strong solution \(u \in H^2(\Omega)\cap H^1_0(\Omega)\).
When the drift term does not vanish, we can nevertheless establish existence and uniqueness of a strong solution to~\eqref{u prob} under condition~\eqref{Cordes}, using ideas from \cite{SSZ25,Spr26}. To this end, we first introduce the notion of near operators as established by Campanato in  \cite{Cam94}.
\begin{lemma}[Campanato nearness]\label{Lmm: Campanato}
Let $L_1,L_2\colon X \rightarrow Y$ be two maps from a set $X\neq \emptyset$ to a real Banach space $(Y,\|\cdot\|)$. Suppose that $L_2$ is bijective and that $L_1$ is $($Campanato-$)$near $L_2$, that is, there exist constants $c > 0$ and $K\in [0,1)$ such that
\begin{align*}
\forall v_1,v_2\in X:\quad \| L_2(v_1) - L_2(v_2) - c\left[ L_1(v_1) - L_1(v_2)\right]\| \leq K \|L_2(v_1) - L_2(v_2)\|.
\end{align*}
Then, $L_1$ is bijective and we have the bound
\begin{align}\label{Cam bd}
\forall v_1,v_2\in X:\quad  \|L_2(v_1)-L_2(v_2)\| \leq \frac{c}{1-K} \|L_1(v_1)-L_1(v_2)\|.
\end{align}
\end{lemma}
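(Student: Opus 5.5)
The plan is to reduce everything to a Banach fixed-point argument on $X$ pulled back through $L_2$. First derive the bound \eqref{Cam bd}, which yields injectivity of $L_1$ directly. Then prove surjectivity by a contraction argument in $Y$.

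\textbf{Step 1 (the bound).} Fix $v_1,v_2\in X$ and abbreviate $a\coloneqq L_2(v_1)-L_2(v_2)$ and $b\coloneqq L_1(v_1)-L_1(v_2)$. By the triangle inequality and the nearness assumption,
\begin{align*}
\|a\| \leq \|a-cb\| + c\|b\| \leq K\|a\| + c\|b\|.
\end{align*}
Since $K<1$, rearranging gives $(1-K)\|a\|\leq c\|b\|$, which is \eqref{Cam bd}.

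\textbf{Step 2 (injectivity).} Suppose $L_1(v_1)=L_1(v_2)$. Then \eqref{Cam bd} gives $L_2(v_1)=L_2(v_2)$, and the injectivity of $L_2$ yields $v_1=v_2$.

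\textbf{Step 3 (surjectivity).} Fix $g\in Y$. Define $T\colon Y\to Y$ by
\begin{align*}
T(y)\coloneqq y - c\,L_1(L_2^{-1}(y)) + c\,g,
\end{align*}
which is well defined because $L_2$ is bijective. For $y_1,y_2\in Y$, set $v_i\coloneqq L_2^{-1}(y_i)$. Then
\begin{align*}
T(y_1)-T(y_2) = L_2(v_1)-L_2(v_2) - c\left[L_1(v_1)-L_1(v_2)\right],
\end{align*}
so the nearness assumption gives $\|T(y_1)-T(y_2)\|\leq K\|y_1-y_2\|$. Thus $T$ is a contraction on the complete metric space $Y$. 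Banach's fixed-point theorem provides $y^*$ with $T(y^*)=y^*$, i.e.\ $c\,L_1(L_2^{-1}(y^*))=c\,g$. Since $c>0$, the element $v\coloneqq L_2^{-1}(y^*)$ satisfies $L_1(v)=g$.

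The main point needing care is Step 3. The set $X$ carries no structure, so the iteration must take place in $Y$, where completeness is available. Transporting it through the bijection $L_2$ is exactly what turns the nearness condition into a contraction estimate. No linearity of $L_1$ or $L_2$ is used anywhere in the argument.
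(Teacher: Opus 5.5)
Your proof is correct and is essentially the paper's argument. Your map $T$ on $Y$ is exactly the paper's $T_f$ conjugated by $L_2$: the paper instead pulls the norm of $Y$ back to the metric $\rho(v_1,v_2)=\|L_2(v_1)-L_2(v_2)\|$ on $X$ and iterates there, and the bound follows from the same triangle-inequality rearrangement. The only cosmetic difference is that you obtain injectivity from the bound, whereas the paper uses uniqueness of the fixed point; both are valid.
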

\begin{proof}
    We give a short proof for completeness. Since $L_2$ is bijective, we have that $(X,\rho)$ is a nonempty complete metric space with the metric $\rho:X\times X\rightarrow [0,\infty)$ defined by $\rho(v_1,v_2)\coloneqq \|L_2(v_1)-L_2(v_2)\|$ for $v_1,v_2\in X$. For $f\in Y$, we introduce
    \begin{align*}
        T_f: X\rightarrow X,\qquad T_f(v)\coloneqq L_2^{-1}(L_2(v)+c[f -  L_1(v)]), 
    \end{align*}
    which is well-defined by bijectivity of $L_2$. We claim that $T_f$ is a contraction on $(X,\rho)$ for all $f\in Y$. Indeed, as $L_1$ is near $L_2$, we have for any $f\in Y$ and $v_1,v_2\in X$ that
    \begin{align*}
        \rho(T_f(v_1),T_f(v_2)) = \|L_2(v_1) - L_2(v_2) - c\left[ L_1(v_1) - L_1(v_2)\right] \| \leq K\rho(v_1,v_2).
    \end{align*}
    Banach's fixed point theorem yields that for any $f\in Y$ there exists a unique $u\in X$ such that $T_f(u) = u$, i.e., $L_1(u) = f$. It follows that $L_1$ is bijective. Finally, for any $v_1,v_2\in X$, we use the triangle inequality and the fact that $L_1$ is near $L_2$ to obtain the inequality $\rho(v_1,v_2) \leq c\|L_1(v_1)-L_1(v_2)\| + K\rho(v_1,v_2)$, which implies \eqref{Cam bd}.
\end{proof}
We now introduce the renormalization function
\begin{align}\label{gamma}
    \gamma \coloneqq  \frac{\mathrm{tr}(A)}{A:A + b\cdot b}\in L^{\infty}(\Omega),
\end{align}
which is positive almost everywhere. The Cordes-type condition \eqref{Cordes} then implies
\begin{align}\label{gamma prop}
    (\gamma A - I_d):(\gamma A - I_d) + (\gamma b)\cdot (\gamma b) = d - \gamma \,\mathrm{tr}(A) \leq 1-\delta,
\end{align}
which allows us to show nearness of a renormalized version of the nondivergence-form differential operator
\begin{align}\label{L}
    L\colon H^2(\Omega)\cap H^1_0(\Omega) \rightarrow L^2(\Omega),\qquad v\mapsto A:D^2 v + b\cdot \nabla v
\end{align}
to the Laplacian as established in the following lemma.

\begin{lemma}[Nearness to the Laplacian]\label{Lmm: near Laplace}

Let $\Omega \subset \R^d$ be a bounded convex domain, $A\in L^{\infty}(\Omega;\R^{d\times d}_{\mathrm{sym}})$, $b\in L^{\infty}(\Omega;\R^{d})$, and suppose that \eqref{unifm ell} and \eqref{Cordes} hold. Recall the constants $\delta,\delta_0,\eta,C_P$ from \eqref{Cordes}, \eqref{d0 and eta}, and \eqref{Cp}. Then, introducing the operator
\begin{align}\label{L gamma}
    L_{\gamma}\colon H^2(\Omega)\cap H^1_0(\Omega) \rightarrow L^2(\Omega),\qquad v\mapsto \gamma A:D^2 v + \gamma b\cdot \nabla v
\end{align}
with $\gamma\in L^{\infty}(\Omega)$ given by \eqref{gamma}, we have that
\begin{align*}
  \forall v\in H^2(\Omega)\cap H^1_0(\Omega):\quad  \|\Delta v - L_{\gamma} v\|_{L^2}\leq \sqrt{1-\kappa}\, \|\Delta v\|_{L^2},
\end{align*}
where $\kappa \coloneqq (\delta - \delta_0)(1+\eta\, C_P^{-2}) \in (0,1]$. In particular,
\begin{align*}
    L_{\gamma}:H^2(\Omega)\cap H^1_0(\Omega) \rightarrow L^2(\Omega)\quad\text{is near}\quad \Delta:H^2(\Omega)\cap H^1_0(\Omega) \rightarrow L^2(\Omega)
\end{align*}
in the sense of Campanato.
\end{lemma}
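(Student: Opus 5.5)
We write the defect pointwise as $\Delta v - L_\gamma v = (I_d-\gamma A):D^2v - \gamma b\cdot\nabla v$. Cauchy--Schwarz in $\R^{d\times d}\times\R^d$ together with \eqref{gamma prop} gives, almost everywhere,
\begin{align*}
|\Delta v - L_\gamma v|^2 \leq \big((\gamma A-I_d):(\gamma A-I_d)+(\gamma b)\cdot(\gamma b)\big)\big(|D^2v|^2+|\nabla v|^2\big)\leq (1-\delta)\big(|D^2 v|^2+\eta|\nabla v|^2\big).
\end{align*}
The factor $\eta$ is justified as follows: if $b=0$ a.e., the drift term is absent and we may drop $|\nabla v|^2$. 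Integrating over $\Omega$ yields
\begin{align*}
\|\Delta v - L_\gamma v\|_{L^2}^2\leq (1-\delta)\big(\|D^2v\|_{L^2}^2+\eta\|\nabla v\|_{L^2}^2\big).
\end{align*}

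Next we bound the right-hand side by $\|\Delta v\|_{L^2}$. For $v\in H^2(\Omega)\cap H^1_0(\Omega)$ on a convex domain, the classical Miranda--Talenti estimate gives $\|D^2 v\|_{L^2}\leq\|\Delta v\|_{L^2}$. This follows from the Grisvard identity $\|D^2v\|^2_{L^2}=\|\Delta v\|^2_{L^2}-(\text{boundary curvature term})$, where the boundary term is nonnegative by convexity. More precisely, we use the version for fields with vanishing tangential trace, $\|D\varphi\|_{L^2}^2\leq \|\mathrm{div}\,\varphi\|_{L^2}^2+\|\mathrm{rot}\,\varphi\|_{L^2}^2$ for $\varphi\in V$, applied to $\varphi=\nabla v\in V$, which has $\mathrm{rot}\,\varphi=0$ and $\mathrm{div}\,\varphi=\Delta v$. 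Since $\nabla v\in V$, the definition \eqref{Cp} also gives $\|\nabla v\|_{L^2}\leq C_P^{-1}\|D^2 v\|_{L^2}$. Combining these two facts gives
\begin{align*}
\|\Delta v-L_\gamma v\|_{L^2}^2\leq(1-\delta)(1+\eta C_P^{-2})\|\Delta v\|_{L^2}^2.
\end{align*}

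It remains to identify the constant. We have $(1-\delta)(1+\eta C_P^{-2}) = 1+\eta C_P^{-2}-\delta(1+\eta C_P^{-2})$. Since $\delta_0(1+\eta C_P^{-2}) = \eta C_P^{-2}$ (check: $\frac{\eta}{1+C_P^2}\cdot\frac{C_P^2+\eta}{C_P^2}$ equals $\eta C_P^{-2}$ for $\eta\in\{0,1\}$), this equals $1-(\delta-\delta_0)(1+\eta C_P^{-2}) = 1-\kappa$. Next, $\kappa>0$ because $\delta>\delta_0$. For $\kappa\leq 1$: if $\eta=0$ this reads $\delta\leq1$; if $\eta=1$ it reads $(\delta-\delta_0)(1+C_P^{-2})\leq 1$, which is equivalent to $\delta\le 1$ after the computation above. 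Nearness in the sense of \cref{Lmm: Campanato} then follows with $c=1$ and $K=\sqrt{1-\kappa}<1$, using linearity. Bijectivity of $\Delta$ on convex domains is standard.

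The main obstacle is the Miranda--Talenti step on a merely convex (nonsmooth) domain. I would cite Grisvard's result, or argue via approximation by smooth convex domains. The bookkeeping of the drift term through the Poincaré constant on $V$, rather than on $H^1_0$, is the other delicate point: it requires checking that $\nabla v$ has vanishing tangential trace, which holds since $v=0$ on $\partial\Omega$.
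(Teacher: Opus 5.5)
Your proof is correct and follows essentially the same route as the paper. Both arguments bound the defect pointwise via Cauchy--Schwarz and \eqref{gamma prop}, then apply the Miranda--Talenti estimate together with the Poincar\'e inequality \eqref{Cp} for $\nabla v\in V$, and finally identify $(1-\delta)(1+\eta C_P^{-2})=1-\kappa$. The only differences are cosmetic: you derive Miranda--Talenti from the Maxwell--Gaffney inequality applied to $\nabla v$ (the same inequality the paper uses later in \cref{Lmm: char of grad u}), and you check the constant bookkeeping and $\kappa\in(0,1]$ explicitly, which the paper only asserts.
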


\begin{proof}
Let $v\in H^2(\Omega)\cap H^1_0(\Omega)$. Noting that $\nabla v\in V$ and using \eqref{Cp}, we have that
\begin{align*}
C_\mathrm{P}\|\nabla v\|_{L^2}\leq \|D^2 v\|_{L^2} \leq \|\Delta v\|_{L^2},
\end{align*}
where the second inequality is the Miranda--Talenti (or Kadlec) estimate. Together with \eqref{gamma prop}, we then find that
\begin{align*}
\|\Delta v - L_{\gamma} v\|_{L^2}^2 &= 
    \|(\gamma A - I_d):D^2 v + \gamma b\cdot \nabla v\|_{L^2}^2 \\&\leq (1-\delta) \left(\|D^2 v\|_{L^2}^2 + \eta\, \|\nabla v\|_{L^2}^2\right) \\
    &\leq (1-\delta)\left(1+\eta\,C_\mathrm{P}^{-2}\right)\|\Delta v\|_{L^2}^2 = (1-\kappa)\|\Delta v\|_{L^2}^2,
\end{align*}
where $\kappa \coloneqq  (\delta - \delta_0)(1+\eta\, C_P^{-2})$. Note that $\kappa\in (0,1]$ since $\delta\in (\delta_0,1]$. 
\end{proof}

We are now in a position to prove well-posedness of the nondivergence-form problem \eqref{u prob} by combining \cref{Lmm: Campanato,Lmm: near Laplace}.

\begin{theorem}[Well-posedness of model problem]\label{Thm: Wp of u prob}
Let $\Omega \subset \R^d$ be a bounded convex domain, and $f\in L^2(\Omega)$. Let $A\in L^{\infty}(\Omega;\R^{d\times d}_{\mathrm{sym}})$ and $b\in L^{\infty}(\Omega;\R^{d})$ be such that~\eqref{unifm ell} and \eqref{Cordes} hold. Then, there exists a unique solution $u\in H^2(\Omega)\cap H^1_0(\Omega)$ to \eqref{u prob}, and we have the bound 
\begin{align*}
 \|\Delta u\|_{L^2} \leq \frac{\|\gamma\|_{L^{\infty}}}{1-\sqrt{1-\kappa}}\, \| f\|_{L^2},  
\end{align*}
where $\gamma\in L^\infty(\Omega)$ is given by \eqref{gamma}, and $\kappa\in (0,1]$ is defined in \cref{Lmm: near Laplace}.

\end{theorem}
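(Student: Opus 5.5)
We combine \cref{Lmm: Campanato} with \cref{Lmm: near Laplace}, taking $X\coloneqq H^2(\Omega)\cap H^1_0(\Omega)$, $Y\coloneqq L^2(\Omega)$, $L_2\coloneqq\Delta$ and $L_1\coloneqq L_\gamma$.

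First, we check the hypotheses of \cref{Lmm: Campanato}. The Dirichlet Laplacian $\Delta\colon X\to L^2(\Omega)$ is bijective on a bounded convex domain: for each $g\in L^2(\Omega)$, the weak solution in $H^1_0(\Omega)$ of $\Delta w=g$ exists by Lax--Milgram, lies in $H^2(\Omega)$ by the classical $H^2$-regularity on convex domains, and is unique. Since $L_\gamma$ and $\Delta$ are linear, applying \cref{Lmm: near Laplace} to $v=v_1-v_2$ gives
\begin{align*}
\|\Delta v_1-\Delta v_2-(L_\gamma v_1-L_\gamma v_2)\|_{L^2}\le \sqrt{1-\kappa}\,\|\Delta v_1-\Delta v_2\|_{L^2}.
\end{align*}
This is Campanato nearness with $c=1$ and $K=\sqrt{1-\kappa}$, and $K\in[0,1)$ because $\kappa\in(0,1]$. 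Therefore \cref{Lmm: Campanato} shows that $L_\gamma\colon X\to L^2(\Omega)$ is bijective. Taking $v_2=0$ in \eqref{Cam bd} also gives $\|\Delta v\|_{L^2}\le (1-\sqrt{1-\kappa})^{-1}\|L_\gamma v\|_{L^2}$ for all $v\in X$.

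Second, we pass from $L_\gamma$ back to $L$. Since $L_\gamma v=\gamma\, Lv$ pointwise a.e., the equation $Lu=f$ is equivalent to $L_\gamma u=\gamma f$. This equivalence requires $\gamma>0$ a.e., which holds: $\mathrm{tr}(A)\ge d\nu_1>0$ by \eqref{unifm ell}, and $A:A+b\cdot b>0$. The right-hand side $\gamma f$ lies in $L^2(\Omega)$ because $\gamma\in L^\infty(\Omega)$. To see the boundedness, note that $\mathrm{tr}(A)\le d\nu_2$ and $A:A\ge (\mathrm{tr}A)^2/d\ge d\nu_1^2$, so $\gamma\le \nu_2/\nu_1^2$.

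Existence and uniqueness of $u$ then follow from the bijectivity of $L_\gamma$. The estimate follows from the bound above:
\begin{align*}
\|\Delta u\|_{L^2}\le \frac{\|\gamma f\|_{L^2}}{1-\sqrt{1-\kappa}}\le \frac{\|\gamma\|_{L^\infty}}{1-\sqrt{1-\kappa}}\|f\|_{L^2}.
\end{align*}

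The only nontrivial external ingredient is the $H^2$-regularity of the Dirichlet Laplacian on convex domains, which is needed for the bijectivity of $L_2$. The remaining steps are short bookkeeping.
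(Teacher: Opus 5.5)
Your proposal is correct and follows the paper's proof: apply the Campanato lemma to $L_\gamma$ near $\Delta$ with $c=1$ and $K=\sqrt{1-\kappa}$, then use $L_\gamma=\gamma L$ with $\gamma>0$ a.e. to transfer bijectivity and the bound to $L$. Your extra checks are accurate details the paper leaves implicit: bijectivity of the Dirichlet Laplacian via $H^2$-regularity on convex domains, and $0<\gamma\le\nu_2/\nu_1^2$.
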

\begin{proof}

We know from \cref{Lmm: near Laplace} that the operator $L_{\gamma}:H^2(\Omega)\cap H^1_0(\Omega) \rightarrow L^2(\Omega)$ given by \eqref{L gamma} is near the bijective operator $\Delta:H^2(\Omega)\cap H^1_0(\Omega) \rightarrow L^2(\Omega)$ in the sense of Campanato (with $c = 1$ and $K = \sqrt{1-\kappa}$). Hence, by \cref{Lmm: Campanato}, we find that $L_{\gamma}:H^2(\Omega)\cap H^1_0(\Omega) \rightarrow L^2(\Omega)$ is bijective and we have the bound
\begin{align*}
 \forall v\in H^2(\Omega)\cap H^1_0(\Omega)\colon \quad   \|\Delta v\|_{L^2} \leq \frac{1}{1-\sqrt{1-\kappa}}\, \| L_{\gamma} v\|_{L^2}.
\end{align*}
Since $\gamma > 0$ almost everywhere in $\Omega$ and $L_{\gamma} = \gamma L$ with $L:H^2(\Omega)\cap H^1_0(\Omega) \rightarrow L^2(\Omega)$ given by \eqref{L}, we find that $L$ is bijective and we have the bound
\begin{align*}
  \forall v\in H^2(\Omega)\cap H^1_0(\Omega)\colon\quad  \|\Delta v\|_{L^2} \leq \frac{\|\gamma\|_{L^{\infty}}}{1-\sqrt{1-\kappa}}\, \| L v\|_{L^2},
\end{align*}
which yields the claimed result.
\end{proof}

\subsection{Symmetric Lax--Milgram problem for $\boldsymbol{\nabla u}$}\label{Sec: 2.2}

Our starting point for constructing a numerical homogenization method is the observation that, inspired by \cite{Gal17b,Gal19}, $\nabla u$ can be characterized as the unique solution of a symmetric Lax--Milgram problem in the space $V$ defined in \cref{eq:V}.

For a chosen stabilization parameter $\sigma > 0$, we introduce the symmetric bilinear form $\fraka\colon V\times V\rightarrow \R$ defined by
\begin{align}\label{bilin form a}
    \fraka(\psi,\varphi) \coloneqq  (A:D\psi + b\cdot \psi, A:D\varphi+ b\cdot \varphi)_{L^2} +  \sigma\,(\mathrm{rot}(\psi),\mathrm{rot}(\varphi))_{L^2}
\end{align}
for $\psi,\varphi\in V$, where we used the notation
\begin{align*}
\mathrm{rot}(\psi) \coloneqq 
\left\{
\begin{aligned}
    &\partial_2 \psi_1 - \partial_1 \psi_2, &&\text{if } d=2,\\
    &(\partial_2 \psi_3 - \partial_3 \psi_2, \;\partial_3 \psi_1 - \partial_1 \psi_3, \;\partial_1 \psi_2 - \partial_2 \psi_1), &&\text{if } d=3.
\end{aligned}
\right.
\end{align*}
Then, we have the following result.

\begin{lemma}[Characterization of $\nabla u$]\label{Lmm: char of grad u}

Let the setting be as in \cref{Thm: Wp of u prob} with $d\in \{2,3\}$, and let $\sigma>0$. Let $\fraka\colon V\times V\rightarrow \R$ be the bilinear form given by \eqref{bilin form a}, and let $u\in H^2(\Omega)\cap H^1_0(\Omega)$ denote the unique strong solution to problem  \eqref{u prob}. Then, the following assertions hold.
\begin{itemize}
    \item[(i)] Coercivity of $\fraka$: there exists a constant $\alpha > 0$ such that for any $\varphi\in V$ it holds $\fraka(\varphi,\varphi) \geq \alpha\|D\varphi\|_{L^2}^2$.
    \item[(ii)] Boundedness of $\fraka$: there exists a constant $\beta > 0$ such that for any $\psi,\varphi\in V$ it holds $\lvert \fraka(\psi,\varphi)\rvert \leq \beta\|D\psi\|_{L^2}\|D\varphi\|_{L^2}$.
    \item[(iii)] There exists a unique $z\in V$ such that
\begin{align}\label{z prob}
  \forall \varphi\in V:\quad  \fraka(z,\varphi) = (f,A:D\varphi + b\cdot \varphi)_{L^2}.
\end{align}
Moreover, it holds $z = \nabla u$.
\end{itemize}
\end{lemma}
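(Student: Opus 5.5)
Everything rests on the identity, valid for all $\varphi\in V$ on the bounded convex domain $\Omega$ (see, e.g., \cite{Gal17b,Gal19} and the Miranda--Talenti-type argument for fields with vanishing tangential trace):
\begin{align*}
\|D\varphi\|_{L^2}^2 \leq \|\mathrm{div}(\varphi)\|_{L^2}^2 + \|\mathrm{rot}(\varphi)\|_{L^2}^2 .
\end{align*}
It plays the role of the Miranda--Talenti estimate from \cref{Lmm: near Laplace} on the level of vector fields: for $\varphi=\nabla v$ one recovers $\|D^2v\|_{L^2}\le\|\Delta v\|_{L^2}$. By density of smooth tangential fields it suffices to prove it for smooth $\varphi$, using an integration-by-parts identity whose boundary term has a sign fixed by convexity. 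I would quote this rather than reprove it.

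\emph{Coercivity (i).} Fix $\varphi\in V$ and write $\tilde L\varphi\coloneqq A:D\varphi+b\cdot\varphi$. The pointwise Cauchy--Schwarz argument from the proof of \cref{Lmm: near Laplace}, together with \eqref{gamma prop}, gives
\begin{align*}
|\mathrm{div}(\varphi)-\gamma\tilde L\varphi| = |(I_d-\gamma A):D\varphi - \gamma b\cdot\varphi| \leq \sqrt{1-\delta}\,\big(|D\varphi|^2+\eta|\varphi|^2\big)^{1/2},
\end{align*}
where $I_d:D\varphi=\mathrm{div}(\varphi)$. Squaring and integrating, and using \eqref{Cp} and the displayed inequality, we get
\begin{align*}
\|\mathrm{div}(\varphi)-\gamma\tilde L\varphi\|_{L^2}^2 \le (1-\kappa)\|D\varphi\|_{L^2}^2\le(1-\kappa)\big(\|\mathrm{div}(\varphi)\|_{L^2}^2+\|\mathrm{rot}(\varphi)\|_{L^2}^2\big).
\end{align*}
The triangle inequality then yields
\begin{align*}
\|\mathrm{div}(\varphi)\|_{L^2}\le \|\gamma\|_{L^\infty}\|\tilde L\varphi\|_{L^2}+\sqrt{1-\kappa}\,\big(\|\mathrm{div}(\varphi)\|_{L^2}+\|\mathrm{rot}(\varphi)\|_{L^2}\big),
\end{align*}
so that $(1-\sqrt{1-\kappa})\|\mathrm{div}(\varphi)\|_{L^2}\le \|\gamma\|_{L^\infty}\|\tilde L\varphi\|_{L^2}+\|\mathrm{rot}(\varphi)\|_{L^2}$. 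Squaring, adding $\|\mathrm{rot}(\varphi)\|_{L^2}^2$, and applying the displayed inequality once more gives $\|D\varphi\|_{L^2}^2\le C\,(\|\tilde L\varphi\|_{L^2}^2+\sigma\|\mathrm{rot}(\varphi)\|_{L^2}^2)=C\,\fraka(\varphi,\varphi)$. The constant $C$ depends only on $\|\gamma\|_{L^\infty}$, $\kappa$ and $\sigma$. Note that $\|\gamma\|_{L^\infty}\le \nu_2 d/(d\nu_1^2)$ is finite by \eqref{unifm ell}.

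\emph{Boundedness (ii)} is routine. We have $\|\tilde L\varphi\|_{L^2}\le \|A\|_{L^\infty}\|D\varphi\|_{L^2}+\|b\|_{L^\infty}\|\varphi\|_{L^2}$, and \eqref{Cp} gives $\|\varphi\|_{L^2}\le C_\mathrm{P}^{-1}\|D\varphi\|_{L^2}$. Also $|\mathrm{rot}(\varphi)|\lesssim|D\varphi|$ pointwise. Cauchy--Schwarz then finishes the bound.

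\emph{Part (iii).} By \eqref{Cp}, $\|D\cdot\|_{L^2}$ is a norm on the closed subspace $V$ equivalent to the $H^1$-norm, so $V$ is a Hilbert space. The right-hand side $\varphi\mapsto(f,\tilde L\varphi)_{L^2}$ is bounded by the same estimate as in (ii). Lax--Milgram therefore gives a unique $z$. To identify $z$, let $u$ be the solution from \cref{Thm: Wp of u prob}. Since $u\in H^2\cap H^1_0$, the tangential derivatives of $u$ vanish on $\partial\Omega$, so $\nabla u\in V$. Moreover $\mathrm{rot}(\nabla u)=0$ and $\tilde L(\nabla u)=Lu=f$. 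Hence $\fraka(\nabla u,\varphi)=(f,\tilde L\varphi)_{L^2}$ for all $\varphi\in V$, and uniqueness gives $z=\nabla u$.

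\textbf{Main obstacle.} The delicate step is the inequality $\|D\varphi\|_{L^2}^2\le\|\mathrm{div}\varphi\|_{L^2}^2+\|\mathrm{rot}\varphi\|_{L^2}^2$ on $V$ for convex (possibly nonsmooth) $\Omega$. Everything else is bookkeeping that transfers the nearness argument of \cref{Lmm: near Laplace} from Hessians to general fields, with the $\mathrm{rot}$-term absorbing the non-gradient part.
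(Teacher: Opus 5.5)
Your proposal is correct and follows essentially the same route as the paper: the Maxwell--Gaffney inequality $\|D\varphi\|_{L^2}^2\le\|\mathrm{div}(\varphi)\|_{L^2}^2+\|\mathrm{rot}(\varphi)\|_{L^2}^2$ on $V$, the Cordes-based bound $\|\mathrm{div}(\varphi)-\gamma\tilde L\varphi\|_{L^2}^2\le(1-\kappa)\|D\varphi\|_{L^2}^2$, and then Lax--Milgram plus uniqueness to identify $z=\nabla u$. The only difference is in the bookkeeping for (i). You combine the estimates with the triangle inequality and $(a+b)^2\le 2a^2+2b^2$, whereas the paper uses an $\varepsilon$-weighted Cauchy--Schwarz step to obtain the explicit constant $\alpha=(1-\sqrt{1-\kappa})^2/((1+\varepsilon)\|\gamma\|_{L^\infty}^2)$; your constant is slightly worse but equally valid.
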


\begin{proof}
    (i) Let $\gamma\in L^\infty(\Omega)$ be the function defined in \eqref{gamma}, and let $\kappa\in (0,1]$ be the constant defined in \cref{Lmm: near Laplace}. Noting that $\sigma \|\gamma\|_{L^{\infty}}^2 > 0$, we fix $\varepsilon > 0$ such that $$\frac{1-\kappa}{\varepsilon} + \frac{(1-\sqrt{1-\kappa})^2}{1+\varepsilon} \leq \sigma \|\gamma\|_{L^{\infty}}^2,$$
    and we define the constants
    \begin{align*}
        r \coloneqq \frac{1-\kappa}{\varepsilon \|\gamma\|_{L^{\infty}}^2} \geq 0,\qquad \alpha \coloneqq \frac{(1-\sqrt{1-\kappa})^2}{(1+\varepsilon)\|\gamma\|_{L^{\infty}}^2} > 0.
    \end{align*}
    For any $\varphi\in V$, writing $\tilde{L}\varphi \coloneqq A:D\varphi + b\cdot \varphi$ and noting that $\sigma \geq r+\alpha$, we have
    \begin{align*}
        &\fraka(\varphi,\varphi)         \geq \|\tilde{L}\varphi\|_{L^2}^2 + r \|\mathrm{rot}(\varphi)\|_{L^2}^2 + \alpha \|\mathrm{rot}(\varphi)\|_{L^2}^2 \\
        &\geq \frac{\left[\|\tilde{L}\varphi\|_{L^2} + \sqrt{\varepsilon r}\, \|\mathrm{rot}(\varphi)\|_{L^2}\right]^2}{1+\varepsilon} + \alpha \|\mathrm{rot}(\varphi)\|_{L^2}^2 \\
        &\geq \frac{\left[\|\gamma\|_{L^{\infty}}   \|\tilde{L}\varphi\|_{L^2} + \sqrt{1-\kappa}\left( \|D\varphi\|_{L^2} -  \|\nabla\cdot \varphi\|_{L^2}\right)\right]^2}{(1+\varepsilon)\|\gamma\|_{L^{\infty}}^2} + \alpha \|\mathrm{rot}(\varphi)\|_{L^2}^2 \\
        &\geq \frac{\left[\|\gamma \tilde{L}\varphi\|_{L^2} + \|\nabla \cdot \varphi - \gamma \tilde L \varphi\|_{L^2} - \sqrt{1-\kappa} \|\nabla\cdot \varphi\|_{L^2}\right]^2}{(1+\varepsilon)\|\gamma\|_{L^{\infty}}^2} + \alpha \|\mathrm{rot}(\varphi)\|_{L^2}^2 \\
        &\geq \alpha \|\nabla\cdot \varphi\|_{L^2}^2 + \alpha \|\mathrm{rot}(\varphi)\|_{L^2}^2 \\
        &\geq \alpha \|D\varphi\|_{L^2}^2,
    \end{align*}
    where we have used the Maxwell (or Gaffney) estimate
    \begin{align*}
      \|D\varphi\|_{L^2}^2 \leq \|\nabla\cdot \varphi\|_{L^2}^2 + \|\mathrm{rot}(\varphi)\|_{L^2}^2,
    \end{align*}
    and in the third from last step that, by \eqref{gamma prop} and \eqref{Cp}, 
    \begin{align*}
\|\gamma \tilde L \varphi - \nabla\cdot \varphi\|_{L^2}^2 &=
        \|(\gamma A -I_d):D\varphi + \gamma b\cdot \varphi\|_{L^2}^2 \\&\leq (1-\delta) \left(\|D \varphi\|_{L^2}^2 + \eta \|\varphi\|_{L^2}^2\right) \\
    &\leq (1-\delta)\left(1+ \eta C_\mathrm{P}^{-2}\right)\|D \varphi\|_{L^2}^2 = (1-\kappa)\|D \varphi\|_{L^2}^2,
    \end{align*}
    with $\eta$ given by \eqref{d0 and eta}.
    
    (ii) This follows directly from H\"{o}lder's inequality, Poincar\'e's inequality \eqref{Cp}, and the fact that $\|\mathrm{rot}(\varphi)\|_{L^2}\lesssim \|D\varphi\|_{L^2}$ for any $\varphi\in V$.
    
    (iii) Note that $\|\cdot\|_{V}\coloneqq  \|D(\cdot)\|_{L^2}$ defines a norm on $V$ due to the Poincar\'{e} inequality \eqref{Cp}. Therefore, in view of (i)-(ii), the Lax--Milgram theorem applies to yield the existence and uniqueness of a function $z\in V$ satisfying \eqref{z prob}. Finally, since for the unique strong solution $u\in H^2(\Omega)\cap H^1_0(\Omega)$ to \eqref{u prob} we have that $\nabla u\in V$ and $\fraka(\nabla u,\varphi) = (f,A:D\varphi + b\cdot \varphi)_{L^2}$ for any $\varphi\in V$ (as $A:D^2 u + b\cdot \nabla u= f$ and $\mathrm{rot}(\nabla u) = 0$ almost everywhere in $\Omega$), it follows that $z = \nabla u$ by the uniqueness of the solution to~\eqref{z prob} in~$V$.
\end{proof}

\subsection{Recovery of $\boldsymbol u$}

Once the solution $z = \nabla u$ to \eqref{z prob} is computed, the recovery of $u$ from $z$ can be achieved as follows.

\begin{lemma}[Recovery of $u$ from $z$]\label{Lmm: recovery}
Let the situation be as in Lemma \ref{Lmm: char of grad u}. The solution $u\in H^2(\Omega)\cap H^1_0(\Omega)$ to \eqref{u prob} is the unique solution to the problem
\begin{align}\label{Poisson}
\left\{
\begin{aligned}
    \Delta u &= \nabla\cdot z &&\text{in } \Omega,\\
    u &= 0 &&\text{on } \partial\Omega,
\end{aligned}
\right.
\end{align}
where $z\in V$ denotes the unique solution to \eqref{z prob}.    
\end{lemma}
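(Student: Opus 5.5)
The argument is short and has two parts: first we show that $u$ solves \eqref{Poisson}, and then we show that \eqref{Poisson} has at most one solution. Both parts use only \cref{Lmm: char of grad u}(iii) and standard properties of the Dirichlet Laplacian on a convex domain.

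\emph{Existence.} By \cref{Lmm: char of grad u}(iii), the unique solution $z\in V$ of \eqref{z prob} satisfies $z=\nabla u$, where $u\in H^2(\Omega)\cap H^1_0(\Omega)$ is the strong solution of \eqref{u prob} from \cref{Thm: Wp of u prob}. Hence $\nabla\cdot z=\nabla\cdot\nabla u=\Delta u$ almost everywhere in $\Omega$. Since $\nabla\cdot z\in L^2(\Omega)$ because $z\in H^1(\Omega;\R^d)$, this means $u$ satisfies $\Delta u=\nabla\cdot z$ in the strong sense. The boundary condition $u=0$ on $\partial\Omega$ holds in the trace sense because $u\in H^1_0(\Omega)$. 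So $u$ solves \eqref{Poisson}.

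\emph{Uniqueness.} We interpret \eqref{Poisson} either strongly in $H^2(\Omega)\cap H^1_0(\Omega)$ or weakly in $H^1_0(\Omega)$; the argument covers both. Let $w\in H^1_0(\Omega)$ be the difference of two solutions. Then $(\nabla w,\nabla v)_{L^2}=0$ for all $v\in H^1_0(\Omega)$. Testing with $v=w$ and using the Poincar\'e--Friedrichs inequality on $H^1_0(\Omega)$ gives $w=0$.

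In the strong setting one can argue more directly. The Laplacian $\Delta\colon H^2(\Omega)\cap H^1_0(\Omega)\to L^2(\Omega)$ is bijective on the convex domain $\Omega$, and this fact was already used in \cref{Thm: Wp of u prob}. Its injectivity gives uniqueness immediately.

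Neither step presents a real obstacle. The only point needing care is to state in which sense \eqref{Poisson} is posed. Because $\nabla\cdot z\in L^2(\Omega)$, the weak solution in $H^1_0(\Omega)$ is automatically in $H^2(\Omega)$ by convex-domain elliptic regularity. The weak and strong notions therefore coincide, and the identification $u$ is unambiguous. This is also the fact that justifies recovering $u$ in practice by a standard $H^1$-conforming finite element discretization of \eqref{Poisson}.
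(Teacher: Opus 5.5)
Your proposal is correct and follows the same route as the paper. The paper's one-line proof identifies $z=\nabla u$ via \cref{Lmm: char of grad u}(iii), so that $\Delta u=\nabla\cdot z$ with $u\in H^2(\Omega)\cap H^1_0(\Omega)$, and then cites uniqueness of strong solutions of the Dirichlet Poisson problem; you spell out that uniqueness explicitly, via the energy argument or via the bijectivity of $\Delta$ on $H^2(\Omega)\cap H^1_0(\Omega)$.
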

\begin{proof}
The result follows directly from \cref{Lmm: char of grad u}~(iii) together with the uniqueness of strong solutions to problem \eqref{Poisson}.
\end{proof}

\section{Ideal numerical homogenization}\label{Sec: 3}

In this section, we introduce an ideal numerical homogenization method that achieves optimal-order approximations without pre-asymptotic effects under minimal structural assumptions on the coefficients. Let $\Omega\subset \R^d$ be a bounded convex polyhedral domain in dimension $d\in \{2,3\}$, and consider a shape-regular hierarchy of geometrically conformal meshes $\{\mathcal{T}_H\}_{H>0}$ (cf.~\cite[Def.~1.55 \& 1.107]{ErG04}). Each mesh~$\mathcal{T}_H$ is a finite partition of $\overline{\Omega}$ into closed elements (either simplices or quadrilaterals/hexahedra). The mesh parameter $H > 0$ is defined by $H \coloneqq \max_{T \in \mathcal{T}_H} \operatorname{diam}(T)$. We assume that the family of meshes is quasi-uniform, that is, $H$ can be bounded from above by a constant multiple of the minimal element diameter. We denote by~$\mathcal{F}_H$ the set of all relatively closed mesh facets (edges for $d = 2$, faces for $d = 3$). To each facet $F \in \mathcal{F}_H$, we associate a fixed unit normal vector $n_F$.

\subsection{Fine-scale space}
A key ingredient in the numerical homogenization method considered here is the fine-scale space, consisting of functions that average out on coarse scales. This space is introduced via the concept of \emph{quantities of interest}, which are associated with each facet $F \in \mathcal F_H$ and defined as
\begin{align}\label{defQOI}
    q_F \in V^{\ast},\qquad \langle q_F,\varphi\rangle \coloneqq  \int_{F} \varphi \cdot n_F \ds\quad\text{for } \varphi \in V.
\end{align}
The fine-scale space is then defined as the closed subspace given by the intersection of the kernels of these quantities of interest, that is,
\begin{align}
	\label{eq:defW}
    W \coloneqq  \bigcap_{F \in \mathcal F_H} \mathrm{ker}(q_F) \subset V.
\end{align}
The following lemma plays an essential role in the analysis of the method.

\begin{lemma}[Local Poincar\'{e}-type inequality on $W$]\label{lem:poincare}
There exists a constant $c_\mathrm{p}>0$ independent of $H$ such that, for all $T \in \mathcal T_H$, it holds that
    \begin{equation}\label{eq:locpoincare}
        \forall w \in W:\quad\|w\|_{L^2(T)} \leq c_\mathrm{p} H\|D w\|_{L^2(T)}.
    \end{equation}
\end{lemma}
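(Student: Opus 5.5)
The argument is local: a reference-element Poincaré inequality combined with a scaling argument. Fix $T\in\mathcal T_H$ and $w\in W$. By the definition \eqref{eq:defW} of $W$, we have $\int_F w\cdot n_F\ds = 0$ for every facet $F\subset\partial T$. It therefore suffices to prove the following purely local statement: for every $v\in H^1(T;\R^d)$ with $\int_F v\cdot n_F\ds=0$ for all facets $F$ of $T$,
\begin{align*}
\|v\|_{L^2(T)}\le c_\mathrm{p} H\|Dv\|_{L^2(T)} .
\end{align*}
Since only the restriction $w|_T\in H^1(T;\R^d)$ enters, the tangential boundary condition in $V$ plays no role.

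The first step is to show that the facet conditions annihilate constant vectors. For a simplex, the $d+1$ facet normals span $\R^d$, so already $d$ of them are linearly independent. For a quadrilateral or hexahedron, which is shape regular (nondegenerate), the facet normals also span $\R^d$. Hence, if $c\in\R^d$ satisfies $\int_F c\cdot n_F\ds = |F|\,c\cdot n_F = 0$ for all facets $F$ of $T$, then $c=0$.

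The second step proves the inequality on a reference element $\hat T$ by the standard compactness (Peetre--Tartar) argument. Suppose the estimate fails. Then there are $v_k$ with $\|v_k\|_{L^2(\hat T)}=1$, $\|Dv_k\|_{L^2(\hat T)}\to0$, and all facet integrals zero. By Rellich's theorem, a subsequence converges in $H^1$ to a constant vector $c$ with $\|c\|_{L^2}=1$. The facet functionals are continuous on $H^1$ by the trace theorem, so $c$ still satisfies all the facet conditions, and the first step gives $c=0$, a contradiction.

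The third step transfers the estimate to $T$. Let $F_T\colon\hat T\to T$ be the affine map for simplices, or the (multi)linear map for quadrilaterals and hexahedra. The subtle point is that the facet conditions involve $n_F$, which is not invariant under a nonorthogonal affine map. I would avoid this by pulling back componentwise, $\hat v = v\circ F_T$, and observing that
\begin{align*}
\int_F v\cdot n_F\ds=\frac{|F|}{|\hat F|}\int_{\hat F}\hat v\cdot n_F\,\mathrm{d}\hat s .
\end{align*}
On $\hat T$ the conditions thus read $\int_{\hat F}\hat v\cdot n_{F}=0$ with the physical normals $n_F$. Applying the first two steps directly to this family yields a constant depending on the normals. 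Shape regularity confines the normals (up to sign, which is irrelevant) and the maps $F_T$ to a compact set. A uniform version of the compactness argument, in which one extracts a convergent subsequence of the normals along with the $v_k$, therefore gives a constant uniform over all such configurations. The remaining work is standard scaling: $\|v\|_{L^2(T)}\approx |T|^{1/2}\|\hat v\|_{L^2(\hat T)}$ and $\|D\hat v\|_{L^2(\hat T)}\lesssim H|T|^{-1/2}\|Dv\|_{L^2(T)}$, with quasi-uniformity absorbing the element diameter into $H$. For nonaffine quadrilaterals and hexahedra, the Jacobian bounds of the multilinear map enter in the same way.

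The main obstacle is this uniformity with respect to the normals under nonaffine or nonorthogonal maps. An alternative that avoids compactness is to write the estimate as $\|v-\bar v\|_{L^2(T)}+|T|^{1/2}|\bar v|$, where $\bar v$ is the mean of $v$ on $T$. One then bounds $|\bar v|$ through $|F|\,\bar v\cdot n_F=\int_F(\bar v-v)\cdot n_F\ds$, using a trace inequality and the standard Poincaré inequality on $T$, together with the uniform invertibility of a matrix built from $d$ normals. Shape regularity guarantees the needed lower bound on its smallest singular value.
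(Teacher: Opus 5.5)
Your proof is correct and follows the same route as the paper. The paper simply cites a generalized Poincaré inequality on the reference element (for a functional that does not vanish on constants, proved by the Peetre--Tartar compactness argument you reproduce) together with the standard scaling estimates for the map to the reference element. Your explicit handling of the facet normals not being preserved under nonorthogonal maps, through either the uniform compactness argument or the mean-value/trace alternative, fills in a detail the paper leaves implicit; quasi-uniformity is also not needed in the final step, since $h_T\le H$ holds by definition.
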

\begin{proof}
	This result can be derived from \cite[Lem.~B.66]{ErG04} using a transformation to the reference element and the corresponding estimates in~\cite[Lem.~1.101]{ErG04}.    
\end{proof}

\subsection{Ideal method}
The problem-adapted approximation space of the ideal numerical homogenization method is defined as the orthogonal complement of \(W\) with respect to the energy inner product \(\fraka\), that is,
\begin{equation}
	\label{eq:Zms}
	\tilde{V}_H \coloneqq \big\{ v \in V \with \fraka(v, w) = 0 \text{ for all } w \in W \big\}.
\end{equation}
Since \(W\) has finite codimension in \(V\), the space \(\tilde{V}_H\) is finite-dimensional.  
The use of tildes emphasizes that these spaces are specifically adapted to the problem.  

The prototypical LOD method is defined as the Galerkin projection onto the problem-adapted approximation space \(\tilde{V}_H\); that is, it seeks \(\tilde{z}_H \in \tilde{V}_H\) such that
\begin{align}\label{eq:protmethod}
    \forall \tilde v_H \in \tilde{V}_H:\quad\fraka(\tilde{z}_H,\tilde{v}_H) = (f,A:D\tilde{v}_H + b\cdot \tilde{v}_H)_{L^2}.
\end{align}
As will be seen later, both in theory (see Remark \ref{Rk: nec corr}) and in numerical experiments, this method does not directly yield a convergence rate in terms of $\|D(z - \tilde{z}_H)\|_{L^2}$, where $z \in V$ denotes the unique solution to \eqref{z prob}. To address this, we introduce additional corrections that allow us to extract rates from the right-hand side. 
To this end, let $Y_H$ be a scalar-valued finite element space associated with the coarse mesh $\mathcal T_H$, and let $\Pi_H\colon H^1(\Omega) \to Y_H$ denote an interpolation operator satisfying, for some $p \in \mathbb{N}_0$, the following estimate
\begin{equation}\label{eq:approxf}
    \forall k\in \{0,1,\ldots,p+1\}:\quad \|v - \Pi_H v \|_{L^2} \leq C_\Pi H^{k} \lvert v\rvert_{H^{k}}\;\;\text{for all }v \in H^{k}(\Omega),
\end{equation}
with a constant $C_\Pi>0$ independent of $H$. Note that $Y_H$ could, in principle, be replaced with any other space possessing approximation properties as in~\eqref{eq:approxf}.

The correction operator $\mathcal{Q}\colon Y_H \to W$ is defined as follows: for any $y_H \in Y_H$, we define $\mathcal{Q}y_H\in W$ to be the unique element of $W$ satisfying
\begin{equation}\label{eq:addcorr}
    \forall w \in W\colon\quad\fraka(\mathcal{Q}y_H, w) = (y_H, A:Dw + b\cdot w)_{L^2},
\end{equation}
and the ideal approximation $\tilde{z}_H \in \tilde{V}_H$ satisfying \cref{eq:protmethod} is then updated in a post-processing step, i.e.,
\begin{equation}\label{eq:idealmod}
    \hat{z}_H \coloneqq \tilde{z}_H + \mathcal{Q}(\Pi_H f),
\end{equation}
assuming that $f\in H^1(\Omega)$.

Higher-order convergence rates, independent of the regularity of the coefficients and the solution, are established in the following theorem.

\begin{theorem}[Ideal method]\label{thm:convideal} 
Suppose that $f\in H^{p+1}(\Omega)$. Let $z \in V$ denote the unique solution to~\cref{z prob} and let $\hat z_H$ be the post-processed ideal multiscale approximation as defined in~\cref{eq:idealmod}. Then, it holds
\begin{align}
    \|D(z - \hat{z}_H)\|_{L^2} &\leq \alpha^{-1}\big(\|A\|_{L^{\infty}} + C_\mathrm{P}^{-1}\|b\|_{L^{\infty}}\big) C_\Pi H^{p+1} \lvert f\rvert_{H^{p+1}},\label{eq:idealH1error}\\
    \|z - \hat{z}_H\|_{L^2} & \leq c_\mathrm{P} H \|D(z - \hat{z}_H)\|_{L^2},\label{eq:idealL2error}
\end{align}
where $\alpha>0$ is the coercivity constant for $\fraka$ from Lemma \ref{Lmm: char of grad u}, $C_\mathrm{P},c_\mathrm{P}>0$ are the Poincar\'{e} constants from \eqref{Cp} and \eqref{eq:locpoincare}, and $C_{\Pi}>0$ is the interpolation constant from \eqref{eq:approxf}.
\end{theorem}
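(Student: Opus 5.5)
The plan is to show that the error $e \coloneqq z - \hat z_H$ lies in the fine-scale space $W$ and satisfies a Galerkin-type identity whose right-hand side only involves $f - \Pi_H f$. The constants in the statement strongly suggest that it suffices to test this identity with $e$ itself and then use coercivity.

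\emph{Step 1: decomposition of $V$.} Since $W$ is closed and $\fraka$ is an inner product on $V$ (by \cref{Lmm: char of grad u}~(i)--(ii)), we have the $\fraka$-orthogonal splitting $V = \tilde V_H \oplus W$. Write $z = z_1 + z_2$ with $z_1\in\tilde V_H$ and $z_2\in W$. Testing \eqref{z prob} with $\tilde v_H\in\tilde V_H$ and using $\fraka(z_2,\tilde v_H)=0$ gives $\fraka(z_1,\tilde v_H) = (f,\tilde L\tilde v_H)_{L^2}$. Comparing with \eqref{eq:protmethod}, and since the Galerkin problem on $\tilde V_H$ is uniquely solvable, we get $z_1=\tilde z_H$. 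Consequently $e = z_2 - \mathcal Q(\Pi_H f)\in W$.

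\emph{Step 2: error identity.} For $w\in W$, using \eqref{z prob}, $\fraka(\tilde z_H,w)=0$ and \eqref{eq:addcorr}, we obtain
\begin{align*}
\fraka(e,w) = (f,\tilde L w)_{L^2} - (\Pi_H f,\tilde L w)_{L^2} = (f-\Pi_H f, A:Dw + b\cdot w)_{L^2}.
\end{align*}
We then take $w=e$ and apply coercivity and Cauchy--Schwarz:
\begin{align*}
\alpha\|De\|_{L^2}^2 \le \|f-\Pi_H f\|_{L^2}\big(\|A\|_{L^\infty}\|De\|_{L^2} + \|b\|_{L^\infty}\|e\|_{L^2}\big).
\end{align*}
Here $|A:De|\le |A|\,|De|$ pointwise, with $\|A\|_{L^\infty}$ understood as the $L^\infty$ norm of the Frobenius norm. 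The Poincar\'e inequality \eqref{Cp} gives $\|e\|_{L^2}\le C_\mathrm{P}^{-1}\|De\|_{L^2}$. Dividing by $\|De\|_{L^2}$ and inserting \eqref{eq:approxf} with $k=p+1$ yields \eqref{eq:idealH1error}.

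\emph{Step 3: $L^2$ estimate.} Since $e\in W$, \cref{lem:poincare} applies on each element. Squaring \eqref{eq:locpoincare} and summing over $T\in\mathcal T_H$ gives $\|e\|_{L^2}\le c_\mathrm{p}H\|De\|_{L^2}$, which is \eqref{eq:idealL2error}.

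The only delicate point is Step 1, namely identifying $\tilde z_H$ as the $\fraka$-orthogonal projection of $z$ onto $\tilde V_H$, so that the error lies in $W$. This requires that $\tilde V_H$ is the full $\fraka$-complement of the closed subspace $W$. That holds by Hilbert space theory, since $V$ equipped with $\fraka$ is a Hilbert space equivalent to $\|D\cdot\|_{L^2}$. The remaining steps are routine.
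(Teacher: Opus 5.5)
Your proposal is correct and follows the paper's proof. The paper also shows $z-\hat z_H\in W$, tests $\fraka(z-\hat z_H,w)=(f-\Pi_H f,A:Dw+b\cdot w)_{L^2}$ with $w=z-\hat z_H$, applies coercivity, H\"older and the global Poincar\'e inequality, and then sums the local Poincar\'e inequality over the elements. Your Step~1 only adds detail: it derives $z-\tilde z_H\in W$ from the $\fraka$-orthogonal splitting $V=\tilde V_H\oplus W$, which the paper states as an observation.
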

\begin{proof}
    We use the coercivity of $\fraka$ from \cref{Lmm: char of grad u}, the definition  of $\mathcal{Q}$ in~\cref{eq:addcorr}, and the observation that $z-\hat{z}_H = z-\tilde{z}_H-\mathcal Q(\Pi_H f) \in W$ to obtain
\begin{align*}
    \alpha\|D(z - \hat{z}_H)\|_{L^2}^2  &\leq \fraka(z - \tilde{z}_H - \mathcal{Q}(\Pi_H f), z - \hat{z}_H)\\& =
    (f- \Pi_H f ,A:D(z - \hat{z}_H) + b\cdot (z - \hat{z}_H))_{L^2} \\
    &\leq \left( \|A\|_{L^{\infty}}\|D(z - \hat{z}_H)\|_{L^2} + \|b\|_{L^{\infty}} \|z - \hat{z}_H\|_{L^2}\right)\|f- \Pi_H f\|_{L^2}.
\end{align*}
In view of the Poincar\'e inequality \eqref{Cp}, we find that
\begin{align*}
    \|D(z - \hat{z}_H)\|_{L^2}\leq \alpha^{-1}\left( \|A\|_{L^{\infty}}  + C_\mathrm{P}^{-1}\|b\|_{L^{\infty}} \right)\|f- \Pi_H f\|_{L^2}.
\end{align*}
The estimate \eqref{eq:idealH1error} then follows using the interpolation error bound \eqref{eq:approxf}. Finally, since $z-\hat{z}_H\in W$, the $L^2$-estimate \cref{eq:idealL2error} is a consequence of \eqref{eq:idealH1error} in view of \cref{lem:poincare}. 
\end{proof}

Note that the variant~\cref{eq:idealmod} does not require $f$ to be known. It just requires to compute a set of additional corrections that are based on a low-dimensional space in which~$f$ may be suitably approximated. This slightly increased complexity regarding number of corrections makes it possible to obtain rates for the ideal method of essentially any order (depending on the amount of additional corrections and the corresponding spaces). This approach has some similarities to the enriched multiscale construction in~\cite{KalKMW25} (to capture time-dependence) and the right-hand side corrections in~\cite{HelM17}. However, the latter requires $f$ to be known.

Let us briefly comment on the necessity of correcting the ideal approximation~$\tilde z_H$ as done in \eqref{eq:idealmod}.

\begin{remark}[Necessity of a correction]\label{Rk: nec corr}
Noting that $z - \tilde z_H \in W$, and recalling~\eqref{eq:defW} and \eqref{eq:Zms}, an analogous argument to \cite[Thm.~3.9]{Altmann2021} yields the bound
\begin{align*}
    \|D(z - \tilde z_H)\|_{L^2} &\leq \alpha^{-1}\inf_{\{c_F\}_{F}\subset \R} \sup_{w\in W\backslash\{0\}} \frac{\fraka(z - \tilde{z}_H, w) - \sum_{F\in \mathcal{F}_H} c_F \langle q_F,w\rangle}{\|Dw\|_{L^2}} \\ &= \alpha^{-1} \inf_{\{c_F\}\subset \R} \sup_{w\in W\backslash\{0\}} \frac{(f,A:Dw+b\cdot w)_{L^2} - \sum_{F\in \mathcal{F}_H} c_F \langle q_F,w\rangle}{\|Dw\|_{L^2}},
\end{align*}
which reveals no convergence rate without an additional correction.
\end{remark}

\subsection{Alternative characterization of $\boldsymbol{\tilde V_H}$}
To better understand the structure of the space \( \tilde{V}_H \), we characterize the \(\fraka \)-orthogonal projection \( \mathcal{R} \colon V \to \tilde{V}_H \) via a saddle point formulation. This formulation uses Lagrange multipliers defined on the mesh skeleton $\Sigma \coloneqq \bigcup_{F \in \mathcal F_H} F$ in the space
\begin{equation*}
M_H \coloneqq \{v \in L^2(\Sigma) \with v|_F = \mathrm{const}\;\text{ for all } F \in \mathcal F_H\}.
\end{equation*}
Specifically, for any \( v \in V \), the projection \( \mathcal{R}v \in V \) and the associated Lagrange multiplier \( \lambda \in M_H \) are defined as the unique solution pair to
\begin{subequations}
	\label{eq:defR0} 
	\begin{align}
		&\qquad \qquad  \qquad \fraka (\mathcal R v, w)& +&  &\frakb(w,\lambda) & &=\quad  &0&&\text{for all }  w \in V, \quad &&\label{eq:defR10}\\
		&\qquad \qquad  \qquad \frakb(\mathcal R v,\mu)                   &   &         &    & &=\quad &\frakb(v,\mu)&&\text{for all } \mu \in M_H, \quad\label{eq:defR20}&&
	\end{align}
\end{subequations}
where $\frakb$ denotes the bilinear form 
\begin{equation*}
 \frakb \colon V \times M_H \to \mathbb R,\qquad   \frakb(v, \mu) \coloneqq \sum_{F \in \mathcal F_H} \int_F (v\cdot n_F) \mu \ds.
\end{equation*}
Indeed, equation~\cref{eq:defR10} ensures that \( \mathcal{R}v \in \tilde{V}_H \), while equation \cref{eq:defR20} implies that \( \mathcal{R}v - v \in W \). The well-posedness of \eqref{eq:defR0} then follows from classical saddle point theory (cf.~\cite[Cor.~4.2.1]{BofBF13}) and, in particular, relies on the inf--sup condition
\begin{equation}  \label{infsupbglob} 
  \adjustlimits\inf_{\mu \in M_H\backslash\{0\}} \sup_{v \in V\backslash\{0\}} \frac{\frakb (v, \mu)}{\|
  Dv \|_{L^2(\Omega)} \| \mu \|_{L^2(\Sigma)}} \gtrsim  H^{1/2} > 0.
\end{equation}
This inf--sup condition can be proved using bubble functions $\{\rho_F\with F \in \mathcal F_H\}$ such that $\rho_F \in V$ with $\operatorname{supp}(\rho_F) \subset \omega_F$, where $\omega_F$ denotes the union of the elements sharing the facet $F$ (one for boundary facets and two for interior facets). These bubble functions can be chosen such that
\begin{equation}\label{eq:deltaprop}
  \forall F,F^\prime \in \mathcal F_H:\quad \langle  q_{F^\prime},\rho_F\rangle = \delta_{F^\prime F},
\end{equation}
and such that the following stability estimates are satisfied:
\begin{equation}\label{eq:stabbubble}
\|\rho_F\|_{L^2(\omega_F)} \lesssim H^{-d/2+1},\qquad \|D \rho_F\|_{L^2(\omega_F)} \lesssim  H^{-d/2}.
\end{equation}
To prove the inf--sup condition~\cref{infsupbglob}, we construct, for any given $\mu \in M_H$, a function $v \in V$ that possesses the same quantities of interest, by
\begin{align*}
	v \coloneqq  \sum_{F \in \mathcal F_H} \mu|_F \rho_{F},
\end{align*}
where $\rho_F$ are the bubble functions introduced above.
Noting that, by construction, we have $\frakb(v, \mu) = |\mu|^2$ with $|\mu|^2 \coloneqq \sum_{F \in \mathcal F_H} (\mu|_F)^2$, and employing the locality of the bubble functions and the stability estimates from~\cref{eq:stabbubble}, we obtain
\begin{align*}
	\|D v\|_{L^2}^2 &\lesssim \sum_{F \in \mathcal F_H} (\mu|_{F})^2 \|D \rho_F\|_{L^2(\omega_F)}^2 
	\lesssim H^{-d} |\mu|^2.
\end{align*}
Combining these estimates, and noting that $\|\mu\|_{L^2(\Sigma)}^2 \lesssim H^{d-1}\lvert \mu\rvert^2$, we deduce the inf--sup stability bound~\cref{infsupbglob}.

\section{Exponential decay}\label{sec:decay}

In this section, we prove exponential decay properties of the operator $\mathcal R:V\rightarrow \tilde V_H$ defined in \cref{eq:defR0}, as well as of related operators. These decay properties will then allow us to define a localized version of $\mathcal R$ in \cref{sec:localization} and to construct a practical numerical homogenization method with locally computable basis functions in \cref{sec:pracmethod}.
To quantify this decay behavior, we introduce the concept of patches. Given an \emph{oversampling parameter} $\ell \in \mathbb N$, the $\ell$-th order patch of a union of elements $S \subset \TH$ is defined recursively for $\ell \geq 2$ by $\Nb^\ell(S) \coloneqq \Nb^1(\Nb^{\ell-1}(S))$, where $\Nb^1(S) \coloneqq \Nb(S)$ denotes the set of mesh elements sharing at least one node with the elements in $S$. We set $\Nb^0(S) \coloneqq S$.

\begin{theorem}[Exponential decay]\label{thm:dec}
    Let $S\subset \TH$ be a union of mesh elements, and let $(\psi,\lambda)\in V\times M_H$ be the unique solution to the problem
    \begin{subequations}
		\label{pbpsi} 
		\begin{align}
			&\qquad \qquad \fraka (\psi, v)& +&  &\frakb(v,\lambda) & &=\quad  &f_S(v)&&\;\mathrm{for}\;\mathrm{all}\; v \in V,\qquad \qquad\quad &&\label{eq:psi1}\\
			&\qquad\qquad \frakb(\psi,\mu)                   &   &         &    & &=\quad  &g_S(\mu)&&\;\mathrm{for}\;\mathrm{all}\; \mu \in M_H,\qquad \qquad\quad\label{eq:psi3}&&
		\end{align}
	\end{subequations}
	where $f_S \in V^*$ and $g_S\in (M_H)^*$ are such that $f_S(v) = 0$ for all $v \in V$ with $ \operatorname{supp}(v) \subset \overline{\Omega \setminus S}$, and $g_S(\mu) = 0$ for all $\mu \in M_H$ with $ \operatorname{supp}(\mu) \subset \overline{\Omega \setminus S}$. 
Then, there exists a constant \( c> 0 \), independent of \( H \), \( \ell \), and \( S \), such that
\begin{equation*}
\|D\psi\|_{L^2(\Omega\setminus \mathsf N^\ell(S))} \leq  \exp(-c \ell)\,\|D\psi\|_{L^2}
\end{equation*}
for all $\ell \in \mathbb N$.
\end{theorem}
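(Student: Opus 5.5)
We prove the decay by the standard LOD cutoff argument combined with a quasi-interpolation adapted to the facet quantities of interest, and then iterate. Fix $\ell\ge 4$ (say) and let $\eta_\ell$ be a piecewise linear (with respect to $\TH$) cutoff function with $\eta_\ell\equiv 0$ on $\Nb^{\ell-2}(S)$, $\eta_\ell\equiv1$ on $\Omega\setminus\Nb^{\ell-1}(S)$, $0\le\eta_\ell\le1$ and $\|\nabla\eta_\ell\|_{L^\infty}\lesssim H^{-1}$. The function $\eta_\ell\psi$ lies in $V$, since the tangential trace condition is preserved under multiplication by a scalar. However, it does not in general satisfy $\frakb(\eta_\ell\psi,\mu)=0$ for $\mu$ supported away from $S$. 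We therefore correct it with the bubbles from \eqref{eq:deltaprop}. Define
\[
\tilde w\coloneqq \eta_\ell\psi-\sum_{F\in\mathcal F_H}\langle q_F,\eta_\ell\psi-\psi\chi_F\rangle\rho_F ,
\]
where $\chi_F$ is $1$ if $F\not\subset \Nb(S)$ and $0$ otherwise. More simply, we subtract $\sum_F \langle q_F,\eta_\ell\psi\rangle\rho_F$ only over the facets $F$ in the transition layer $\Nb^{\ell}(S)\setminus\Nb^{\ell-3}(S)$, where $\eta_\ell$ is not locally constant. On facets where $\eta_\ell\equiv1$, we have $\langle q_F,\eta_\ell\psi\rangle=\langle q_F,\psi\rangle=g_S(\cdot)$-data, which vanishes for $F$ away from $S$ by \eqref{eq:psi3}. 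On facets where $\eta_\ell\equiv0$, the value is already zero. Hence the corrected function $w$ satisfies $\langle q_F,w\rangle=0$ for all $F$, i.e.\ $w\in W$, and its support is in $\Omega\setminus\Nb^{\ell-3}(S)$, disjoint from $S$.

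The key local estimate is $\|Dw\|_{L^2}\lesssim \|D\psi\|_{L^2(\Omega\setminus \Nb^{\ell-3}(S))}$, and more precisely that $D(w-\psi)$ restricted to $\Omega\setminus\Nb^{\ell}(S)$ vanishes while, in the layer $R_\ell\coloneqq\Nb^{\ell}(S)\setminus\Nb^{\ell-3}(S)$, $\|D(w-\eta_\ell\psi)\|\lesssim\|D\psi\|_{L^2(R_\ell)}$. To show the latter, we use \eqref{eq:stabbubble} together with a trace inequality: $|\langle q_F,\eta_\ell\psi\rangle| = |\langle q_F,(\eta_\ell-c)\psi\rangle|$, where we may subtract $\psi$'s own QOI (zero in the layer, since $F$ is away from $S$). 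This gives a bound of order $H^{(d-1)/2}(H^{-1/2}\|\psi\|_{L^2(\omega_F)}+H^{1/2}\|D\psi\|_{L^2(\omega_F)})$. Here $\psi$ itself has vanishing QOI on all facets outside $S$, so locally $\psi$ behaves like an element of $W$. The element-wise Poincaré inequality of \cref{lem:poincare}, whose proof only uses vanishing facet integrals on the element, gives $\|\psi\|_{L^2(T)}\lesssim H\|D\psi\|_{L^2(T)}$ for all $T$ outside $\Nb(S)$. The same inequality controls $\|\psi\nabla\eta_\ell\|\lesssim \|D\psi\|_{L^2(R_\ell)}$.

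Testing \eqref{eq:psi1} with $w$, both $f_S(w)=0$ and $\frakb(w,\lambda)=0$ (since $w\in W$), so $\fraka(\psi,w)=0$. With $\Omega_\ell\coloneqq\Omega\setminus\Nb^{\ell}(S)$, coercivity (localized) is the main obstacle, because $\fraka$ is only coercive globally via the Miranda--Talenti/Gaffney argument, and restricting $\fraka(\psi,\psi)$ to a subdomain loses this. I would handle it by writing $\alpha\|D(\eta\psi)\|^2\le\fraka(\eta\psi,\eta\psi)$ using global coercivity for $\eta\psi\in V$. Then I would expand $\fraka(\eta\psi,\eta\psi)=\fraka(\psi,\eta^2\psi)+$ commutator terms involving $\nabla\eta$, each bounded by $\|D\psi\|_{L^2(R_\ell)}\|D\psi\|_{L^2(\Omega\setminus\Nb^{\ell-3}(S))}$. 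Next, I would replace $\eta^2\psi$ by the $W$-function obtained via the bubble correction above (with $\eta^2$ in place of $\eta$), whose $\fraka$-pairing with $\psi$ vanishes. The difference lives in the layer and is controlled as in the previous paragraph.

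This yields $\|D\psi\|_{L^2(\Omega_\ell)}^2\le C\|D\psi\|_{L^2(R_\ell)}^2 = C(\|D\psi\|^2_{L^2(\Omega\setminus\Nb^{\ell-3}(S))}-\|D\psi\|^2_{L^2(\Omega_\ell)})$. Hence $\|D\psi\|^2_{L^2(\Omega_\ell)}\le \tfrac{C}{1+C}\|D\psi\|^2_{L^2(\Omega\setminus\Nb^{\ell-3}(S))}$. Iterating in steps of $3$ gives the decay $(\tfrac{C}{1+C})^{\lfloor \ell/3\rfloor/2}$. Adjusting $c$ absorbs small $\ell$, for which the bound holds trivially with the factor $1$.
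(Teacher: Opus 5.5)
Your one-step estimate is sound and is essentially the paper's argument: global coercivity applied to the cut-off function, all error terms confined to the transition ring, the element-local Poincaré inequality for $\psi$ away from $S$, and bubbles to deal with the constraint. Correcting into $W$ with bubbles is equivalent to the paper's bound on the multiplier. Indeed, testing \eqref{eq:psi1} with $\rho_F$ (for $\operatorname{supp}(\rho_F)\subset\overline{\Omega\setminus S}$) gives $\lambda|_F=-\fraka(\psi,\rho_F)$, so $\frakb(\eta\psi,\lambda)=-\fraka\bigl(\psi,\sum_F\langle q_F,\eta\psi\rangle\rho_F\bigr)$, which is exactly the remainder you estimate. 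Your $\eta^2$-expansion is also fine. The commutator is exactly $\|(A\nabla\eta)\cdot\psi\|_{L^2}^2+\sigma\|\mathrm{rot}(\eta\psi)-\eta\,\mathrm{rot}(\psi)\|_{L^2}^2\lesssim\|D\psi\|_{L^2(R_\ell)}^2$. With this you do not need your looser product bound, which would have required a Young absorption before you could write $\|D\psi\|^2_{L^2(\Omega_\ell)}\le C\|D\psi\|^2_{L^2(R_\ell)}$.

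The step that fails is the last one. The theorem asserts the bound with prefactor exactly $1$. For $\ell\le 3$, where your recursion is unavailable, you only have the trivial factor $1$, and this cannot be ``absorbed by adjusting $c$'' because $1\le e^{-c\ell}$ is false for every $c>0$. As written, your three-layer iteration proves only $\|D\psi\|_{L^2(\Omega\setminus\Nb^\ell(S))}\le C e^{-c\ell}\|D\psi\|_{L^2}$ with some $C>1$.

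The three layers come from restricting the local Poincaré inequality to elements outside $\Nb(S)$, which is unnecessarily conservative. Every element $T\not\subset S$ satisfies $T\subset\overline{\Omega\setminus S}$, so \eqref{eq:psi3} gives $\langle q_F,\psi\rangle=0$ on all facets of $T$. Hence the element-local inequality behind \cref{lem:poincare} holds on every element of any ring around $S$. Moreover, take the one-layer cutoff with $\eta=0$ on $\Nb^{\ell-1}(S)$ and $\eta=1$ on $\Omega\setminus\Nb^{\ell}(S)$. A facet on which $\eta$ is nonconstant has a vertex in $\Nb^{\ell-1}(S)$ and a vertex outside it. So its neighbours lie in $\Nb^{\ell}(S)\setminus\Nb^{\ell-1}(S)$ and $\operatorname{supp}(\rho_F)\subset\overline{\Omega\setminus S}$.

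Your argument then runs verbatim for every $\ell\ge 1$ and gives $\|D\psi\|^2_{L^2(\Omega\setminus\Nb^{\ell}(S))}\le\tfrac{C}{1+C}\|D\psi\|^2_{L^2(\Omega\setminus\Nb^{\ell-1}(S))}$. Iterating from $\ell=1$ yields the stated constant-free bound with $c=\tfrac12\log\tfrac{1+C}{C}$, as in the paper.
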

\begin{proof} 
The proof relies on cut-off techniques, which are widely used in the context of multiscale methods (see, e.g., \cite{Mlqvist2020,Altmann2021}) and are adapted here to the present setting. We fix an integer $\ell \ge 1$ and denote by $\eta \in W^{1,\infty}(\Omega)$ the first-order Lagrange finite element cut-off function, defined with respect to $\mathcal T_H$, characterized by
  \begin{equation}
      \eta  =0 \text{ in } \Nb^{\ell - 1} (S),\quad
      \eta  =1 \text{ in } \Omega \setminus \Nb^{\ell} (S),  \label{eq:eta}
  \end{equation}
  with the transition region $R \coloneqq \Nb^\ell(S)\setminus \Nb^{\ell-1}(S)$. It satisfies
  \begin{equation}
    \label{eq:boundeta} 
    \| \nabla\eta \|_{L^{\infty}} \lesssim  
    H^{-1}.
  \end{equation} 
  We have that
\begin{align*}
\|D\psi\|_{L^2(\Omega\setminus N^\ell(S))}^2 &\leq \|D(\eta\psi)\|_{L^2}^2 
\\ &\lesssim \fraka(\eta\psi,\eta\psi)\\
&= \fraka(\psi,\eta\psi) - \fraka((1-\eta)\psi,\eta\psi)\\
&= f_S(\eta\psi) - \frakb(\eta\psi,\lambda) - \fraka((1-\eta)\psi,\eta\psi)\\
&\eqqcolon \Xi_1 - \Xi_2 - \Xi_3.
\end{align*}
Note that \(\Xi_1 = 0\) since \(\operatorname{supp}(\eta\psi)\subset \overline{\Omega\setminus N^{\ell-1}(S)}\subset \overline{\Omega \setminus S}\). 

In order to bound the term $\Xi_2$, let us first recall the trace inequality
\begin{align}\label{tr ineq}
    \forall v\in H^1(T_F;\R^d):\quad \|v\|_{L^2(F)} \lesssim H^{-1/2}\|v\|_{L^2(T_F)} + H^{1/2}\|Dv\|_{L^2(T_F)},
\end{align}
where \(T_F\) denotes an element such that \(F\subset T_F\); see, e.g., \cite[Lem.~1.49]{di2011mathematical}.
To estimate \(\lambda\vert_F\), we obtain by testing~\cref{eq:psi1} with the bubble function $\rho_F$ for each face \(F\subset R\) (recall \eqref{eq:deltaprop}, \eqref{eq:stabbubble}, and note $\operatorname{supp}(\rho_F)\subset \omega_F\subset \overline{\Omega \setminus S}$) that
\begin{align*}
\left\lvert \lambda\vert_F\right\rvert = |\frakb(\rho_F,\lambda)| &= | \fraka(\psi,\rho_F)| \\
&\lesssim \left( \|D\rho_F\|_{L^2(\omega_F)} + \|\rho_F\|_{L^2(\omega_F)}\right)\left( \|D\psi\|_{L^2(\omega_F)} + \|\psi\|_{L^2(\omega_F)}\right) \\ &\lesssim  H^{-d/2} \|D \psi\|_{L^2(\omega_F)},
\end{align*}
using the local Poincar\'{e} inequality from Lemma \ref{lem:poincare} and the bound \eqref{eq:stabbubble} in the final step. Since $\langle q_F,\psi\rangle = 0$ for all faces $F$ outside $\operatorname{int}(S)$ by \cref{eq:psi3}, we have
\begin{align*}
    \Xi_2 &= \sum_{F \in \mathcal F_H \with F\subset R} \int_F (\eta\psi\cdot n_F)\, \lambda|_F \ds \lesssim \sum_{F \in \mathcal F_H\with F\subset R} \|D\psi\|_{L^2(\omega_F)}^2\lesssim \|D\psi\|_{L^2(R)}^2,
\end{align*}
where we have used \cref{lem:poincare} and the trace inequality \eqref{tr ineq}. 

Finally, we estimate the term $\Xi_3$. We have
\begin{align*}
\Xi_3 &\lesssim \left(\|D((1-\eta)\psi)\|_{L^2(R)}+\|(1-\eta)\psi\|_{L^2(R)}\right)\left(\|D(\eta\psi)\|_{L^2(R)}+\|\eta\psi\|_{L^2(R)}\right) \\
&\lesssim \|D\psi\|_{L^2(R)}^2,
\end{align*}
where we used the product rule, the bound~\eqref{eq:boundeta} for the cut-off function, and \cref{lem:poincare}.

Since \(R = (\Omega \setminus \Nb^{\ell-1}(S)) \setminus (\Omega \setminus \Nb^{\ell}(S))\), we conclude that for a constant $C>0$ independent of $H$ and $\ell$, it holds that
  \begin{align*}
    \| D\psi \|_{L^2(\Omega \setminus \Nb^{\ell} (S))}^2 \leq C \| D
    \psi \|_{L^2(R)}^2 = C\| D\psi \|_{L^2(\Omega \setminus \Nb^{\ell - 1} (S))}^2 -
    C \| D\psi \|_{L^2(\Omega \setminus \Nb^{\ell} (S))}^2,
  \end{align*} 
which, after rearranging the terms, leads to
  \begin{equation*} 
   \| D\psi \|_{L^2(\Omega \setminus \Nb^{\ell} (S))}^2 \leq \frac{C}{1 + C}  \,\| D\psi \|_{L^2(\Omega \setminus \Nb^{\ell - 1}
     (S))}^2.
   \end{equation*} 
   Iterating the argument gives the assertion with decay rate $c\coloneqq\frac 12 \log \big(\frac{1+C}{C}\big)$.
\end{proof}

\section{Localization}
\label{sec:localization}

To localize the operator $\mathcal R$, we follow the approach proposed in the recent generalized framework for high-order LOD methods \cite{Hauck2026}, which has also been applied in \cite{Hauck2025HOStokes,Hauck2025MixedDim}. This framework enables a stable basis construction, ensuring that the error of the localized method does not grow as the coarse mesh is refined, provided that the oversampling parameter, defined as the number of element layers forming the oversampling domains, remains fixed. Such undesirable effects have been observed, for example, in \cite{MalP14,Maier2021}. Notably, this stability is achieved without relying on intricate bubble function constructions in practical implementations of the method (cf. \cite{Hauck2022,Dong2023}).

The key idea of this localization approach is to expresses the operator $\mathcal R$ as
\begin{equation}\label{defK}
	\mathcal{R} = \mathcal{I}_H - \mathcal{K},
\end{equation}
where the operator \( \mathcal{K} \colon V \to V \) will be characterized below, and \( \mathcal{I}_H \colon V \to V_H \) is a quasi-interpolation operator onto the first-order \(V\)-conforming finite element space~\( V_H \) associated with the mesh \( \mathcal{T}_H \). 
The quasi-interpolation operator \(\mathcal{I}_H\) (which is not necessarily a projection) is assumed to satisfy classical local approximation and stability estimates. That is, for all elements $T \in \mathcal{T}_H$, we have
\begin{equation}
	\label{eq:propIH}
\forall v\in V:\quad	H^{-1}\|v - \mathcal{I}_H v\|_{L^2(T)} + \|D(\mathcal{I}_H v)\|_{L^2(T)} \lesssim \|Dv\|_{L^2(\mathsf{N}(T))}.
\end{equation}
Moreover, \(\mathcal{I}_H\) should depend on its argument only through its quantities of interest defined in \cref{defQOI}, i.e., \(\mathcal{I}_H w = 0\) for all $w \in W$. Operators with these properties can be readily constructed for the problem at hand. A detailed description of such a construction is provided in \cref{sec:numexp}.

The operator \( \mathcal{K} \colon V \to V \), as introduced in \cref{defK}, is characterized for each \( v \in V \) via the unique solution \( (\mathcal{K}v, \lambda) \in V \times M_H \) to the saddle point problem
\begin{subequations}
	\label{eq:defK} 
	\begin{align}
		&\quad  \fraka (\mathcal K v, w)& +&  &\frakb(w,\lambda) & &=&&\  &\fraka(\mathcal I_H v,w)&&\text{for all } w \in V,\quad \quad &&\label{eq:defK1}\\
		&\quad \frakb(\mathcal K v,\mu)                   &   &         &    & &=&&\  &-\frakb(v-\mathcal I_H v,\mu)&&\text{for all } \mu \in M_H.\quad \quad\label{eq:defK2}&&
	\end{align}
\end{subequations}
We note that if $(\mathcal{K}v, \lambda)$ is the solution to \eqref{eq:defK}, then $((\mathcal{I}_H-\mathcal{K})v,-\lambda)$ is the solution to \eqref{eq:defR0}. The operator \( \mathcal{K} \) can be expressed as a sum of local element contributions, i.e.,
\begin{equation*}
	\mathcal{K} = \sum_{T \in \mathcal{T}_H} \mathcal{K}_T,
\end{equation*}
where, for each $T \in \TH$, the operator \( \mathcal{K}_T \colon V \to V \) is defined, for \( v \in V \), via the unique solution \( (\mathcal{K}_T v, \lambda_T) \in V \times M_H \) to the modified saddle point problem
	\begin{subequations}
	\label{eq:defKT} 
	\begin{align}
		& \fraka (\mathcal K_T v, w)& +&  &\frakb(w,\lambda_T) & &=   &&&\fraka_T(\mathcal I_H v,w)&&\text{for all } w \in V,\quad&&\label{eq:defKT1}\\
		& \frakb(\mathcal K_T v,\mu)                   &   &         &    & &=&  &&-\frakb_T(v-\mathcal I_H v,\mu)&&\text{for all } \mu \in M_H. \label{eq:defKT2}&&
	\end{align}
\end{subequations}
Here, for $T\in \mathcal{T}_H$, The bilinear form $\frakb_T$ denotes a localized version of $\frakb$ defined as
\begin{equation*}
    		\frakb_{T}(v,\mu) \coloneqq  \sum_{F \in \mathcal F_H \with F \subset \partial T} \frac{1}{N(F)}\,\mu\vert_{F}\,\langle q_{F},v\rangle,
\end{equation*}
where $N(F) \coloneqq \#\{ K \in \mathcal{T}_H \with F \subset \partial K\}$ is the number of elements sharing the face \( F \in \mathcal{F}_H \). The localized version $\fraka_T$ of the bilinear form $\fraka$ is defined by restricting the domain of integration in \cref{bilin form a} to the element $T$.

Due to the general setting considered in \cref{thm:dec}, the result applies to the function \( \mathcal{K}_T v \) for any \( v \in V \), and shows that it decays exponentially away from the element \( T \). This motivates the localization of the operator \( \mathcal{K}_T \) to $\ell$-th order patches around $T$. To this end, we introduce the corresponding localized spaces
\begin{align}\label{eq:locspaces}
\begin{split}
    V_T^\ell &\coloneqq \bigl\{ v \in V \with 
	v = 0 \;\text{a.e. in } \Omega \setminus \mathsf N^\ell(T) \bigr\}, \\
	M_T^\ell &\coloneqq \bigl\{ \mu \in M_H \with 
	\mu|_F = 0 \;\; \text{for all } F \in \mathcal F_H \setminus \mathcal F_T^\ell \bigr\},
\end{split}
\end{align}
where $\mathcal F_T^\ell \subset \mathcal F_H$ denotes the localized set of facets, defined by
\[
\mathcal F_T^\ell \coloneqq \bigl\{ F \in \mathcal F_H \with 
F \subset \mathsf N^\ell(T)\text{ and }
F \setminus \partial \mathsf N^\ell(T) \neq \emptyset \bigr\}.
\]
The localized operator \( \mathcal{K}_T^\ell \colon V \to V_T^\ell \) can then be defined, for any \( v \in V \), as the unique solution \( (\mathcal{K}_T^\ell v, \lambda_T^\ell) \in V_T^\ell \times M_T^\ell \) to the local saddle point problem
	\begin{subequations}
	\label{eq:defKTell} 
	\begin{align}
		& \fraka (\mathcal K_T^\ell v, w)& +&  &\frakb(w,\lambda_T^\ell) & &=&&  &\fraka_T(\mathcal I_H v,w)&&\text{for all } w \in V_T^\ell,\quad\quad &&\label{eq:defKTell1}\\
		& \frakb(\mathcal K_T^\ell v,\mu)                   &   &         &    & &=&&  &-\frakb_T(v-\mathcal I_H v,\mu)&&\text{for all } \mu \in M_T^\ell.\quad\label{eq:defKTell2}&&
	\end{align}
\end{subequations}
Finally, a localized version of the operator $\mathcal R$ can be defined by
\begin{equation}
	\label{eq:defRl}
	\mathcal R^\ell \coloneqq \mathcal I_H - \mathcal K^\ell,\qquad \mathcal K^\ell  \coloneqq \sum_{T \in \mathcal{T}_H} \mathcal K_T^\ell.
\end{equation}
The following theorem shows that the localized operator \( \mathcal{R}^\ell \) approximates \( \mathcal{R} \) exponentially well in the operator norm. Notably, it avoids the \( H^{-1} \) pre-factor typically arising in naive localization strategies (see, e.g.,~\cite{MalP14,Maier2021}) while also avoiding technical bubble constructions as in~\cite{Dong2023} in the implementation.

\begin{theorem}[Localization error for $\mathcal R$]
	\label{thm:locerr}
        For all \( v \in V \) and \( \ell \in \mathbb{N} \), we have 
		\begin{equation*}
			\|D(\mathcal{R}v - \mathcal{R}^\ell v)\|_{L^2} \lesssim \ell^{(d-1)/2} \exp(-c \ell)\|D v\|_{L^2},
		\end{equation*}
		where \( c > 0 \) is the constant from \cref{thm:dec}.
\end{theorem}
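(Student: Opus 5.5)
Since $\mathcal R v - \mathcal R^\ell v = \mathcal K^\ell v - \mathcal K v = \sum_{T\in\mathcal T_H} (\mathcal K_T^\ell v - \mathcal K_T v)$, it suffices to estimate the sum of the element-wise localization errors $e_T \coloneqq \mathcal K_T v - \mathcal K_T^\ell v$. The plan follows the standard LOD localization argument in the form of \cite{Hauck2026}, with three ingredients: a local error bound for each $e_T$, a local stability bound for $\mathcal K_T v$, and a finite-overlap argument that combines the local errors without an $H^{-1}$ factor.

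\textbf{Step 1 (local error).} I will show $\|D e_T\|_{L^2} \lesssim \exp(-c\ell)\|D\mathcal K_T v\|_{L^2}$, or at least $\lesssim \|D\mathcal K_T v\|_{L^2(\Omega\setminus \Nb^{\ell-1}(T))}$. The saddle point problem \eqref{eq:defKTell} is a Galerkin approximation of \eqref{eq:defKT} on $V_T^\ell\times M_T^\ell$; it is inf--sup stable because the bubbles $\rho_F$ with $F\in\mathcal F_T^\ell$ lie in $V_T^\ell$. The constraint \eqref{eq:defKTell2} fixes the quantities of interest of $\mathcal K_T^\ell v$ on the facets in $\mathcal F_T^\ell$, and these coincide with those of $\mathcal K_T v$. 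Facets outside $\mathcal F_T^\ell$ have vanishing data for both functions, and the quantities of interest of $\mathcal K_T^\ell v$ vanish there since it has support in $\Nb^\ell(T)$. Hence $e_T\in W$, and $\mathcal K_T^\ell v$ is the $\fraka$-best approximation of $\mathcal K_T v$ in the affine space $\{\phi\in V_T^\ell : \phi - \mathcal K_T v \in W\}$, once the $\lambda$-term is handled by testing with $W$-functions. As a competitor I take $\phi = (1-\eta)\mathcal K_T v$, corrected by bubbles $\sum_F c_F\rho_F$ on the transition facets to restore the quantities of interest, with $\eta$ the cut-off from \eqref{eq:eta}. The product rule, \eqref{eq:boundeta}, \cref{lem:poincare} (applicable since $\mathcal K_T v\in W$ away from $T$), and \eqref{eq:stabbubble} then give $\|D(\mathcal K_T v-\phi)\|_{L^2} \lesssim \|D\mathcal K_T v\|_{L^2(\Omega\setminus\Nb^{\ell-1}(T))}$. \cref{thm:dec} applies to $\mathcal K_T v$ with $S=T$, because the right-hand sides $\fraka_T(\mathcal I_H v,\cdot)$ and $\frakb_T(\cdot,\cdot)$ are supported in $T$; this bounds the last quantity by $\exp(-c(\ell-1))\|D\mathcal K_T v\|_{L^2}$.

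\textbf{Step 2 (local stability).} I will show $\|D\mathcal K_T v\|_{L^2}\lesssim \|Dv\|_{L^2(\Nb(T))}$. Testing \eqref{eq:defKT1} with $w\in W$ removes $\lambda_T$. I then split $\mathcal K_T v = w_0 + \sum_{F\subset\partial T} c_F\rho_F$, where the $c_F$ are the prescribed quantities of interest from \eqref{eq:defKT2}. Each $c_F$ satisfies $|c_F|\lesssim H^{d/2}\|Dv\|_{L^2(\Nb(T))}$ by \eqref{tr ineq} and \eqref{eq:propIH}, and combined with \eqref{eq:stabbubble} this controls the bubble part. The remaining part $w_0\in W$ is then handled by coercivity, using $|\fraka_T(\mathcal I_H v,w)|\lesssim \|D\mathcal I_H v\|_{L^2(T)}\|Dw\|_{L^2(T)}$ together with the Poincaré inequality.

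\textbf{Step 3 (summation), the main obstacle.} The naive bound $\sum_T\|De_T\|$ costs $H^{-d/2}$ through Cauchy--Schwarz, so I will instead write $\|D\sum_T e_T\|^2 \lesssim \fraka(\sum_T e_T, e)$ with $e\coloneqq\sum_T e_T$. For each $T$ I split $e = \eta_T e + (1-\eta_T)e$, where $\eta_T$ is a cut-off vanishing on $\Nb^{\ell+1}(T)$. The part $\fraka(e_T,(1-\eta_T)e)$ is bounded by $\|De_T\|\,\|De\|_{L^2(\Nb^{\ell+2}(T))}$. The part $\fraka(e_T,\eta_T e)$ is small through the Galerkin orthogonality of $e_T$ against functions supported away from $\Nb^\ell(T)$, after correcting $\eta_T e$ with bubbles so that it lies in $W$, plus a cut-off error involving $\|De\|$ on a ring. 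Since each point is covered by $\lesssim\ell^d$ patches $\Nb^{\ell+2}(T)$, only the covering of thin rings enters the ring terms, which contributes the factor $\ell^{(d-1)/2}$. Combining this with Steps 1--2 and the finite overlap of the sets $\Nb(T)$ gives $\|De\|^2\lesssim \ell^{d-1}\exp(-2c\ell)\sum_T\|Dv\|^2_{L^2(\Nb(T))}\lesssim \ell^{d-1}\exp(-2c\ell)\|Dv\|^2_{L^2}$. The constant $c$ is adjusted by absorbing fixed shifts of $\ell$ into the constant. Obtaining exactly the power $\ell^{(d-1)/2}$, rather than the cruder $\ell^{d/2}$, from the ring-overlap count is where I expect the careful bookkeeping to lie.
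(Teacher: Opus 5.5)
Your Steps~1 and~2 are sound, with one small correction. Because of the drift term, $\fraka_T(\mathcal I_H v,w)$ is only controlled by $\|\mathcal I_H v\|_{H^1(T)}\|w\|_{H^1(T)}$, so the stability bound should read $\|D\mathcal K_T v\|_{L^2}\lesssim \|v\|_{H^1(\Nb(T))}$. This still sums to $\lesssim\|Dv\|_{L^2}^2$ by \eqref{Cp}.

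The genuine gap is in Step~3. With your split, the term $\fraka(e_T,(1-\eta_T)e)$ is bounded by $\|De_T\|_{L^2}\|De\|_{L^2(\Nb^{\ell+2}(T))}$, which is a \emph{full-patch} norm of $e$. Cauchy--Schwarz over $T$ then produces the overlap factor $\ell^{d/2}$, not $\ell^{(d-1)/2}$. Your sentence that ``only thin rings enter'' does not follow from the split you describe. Since the statement fixes $c$ as the rate from \cref{thm:dec}, you also cannot hide the extra $\ell^{1/2}$ in the exponential.

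The missing idea is the \emph{interior} orthogonality. By the computation behind your Step~1, $\fraka(e_T,w)=0$ for every $w\in W\cap V_T^\ell$, because both $\mathcal K_T v$ and $\mathcal K_T^\ell v$ satisfy $\fraka(\cdot,w)=\fraka_T(\mathcal I_H v,w)$ for such $w$. To use it:
\begin{itemize}
\item add a second cut-off that equals $1$ on $\Nb^{\ell-1}(T)$ and vanishes outside $\Nb^\ell(T)$;
\item restore membership in $W$ with bubbles on the facets interior to that transition ring, which lie inside the patch;
\item combine this with the exterior orthogonality you already use.
\end{itemize}
Then every contribution to $\fraka(e_T,e)$ is supported on a ring of $\mathcal O(1)$ element layers around $\partial\Nb^\ell(T)$. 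Only then does the $\mathcal O(\ell^{d-1})$ ring-overlap count yield $\ell^{(d-1)/2}$.

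With that repair your route works, but it differs from the paper's. The paper never introduces the per-element errors $\mathcal K_T v-\mathcal K_T^\ell v$ or the global $\mathcal K_T$. Using $e=(\mathcal R-\mathcal R^\ell)v\in W$ and $\fraka(\mathcal R v,e)=0$, it writes $\fraka(e,e)=\sum_T\bigl(\fraka(\mathcal K_T^\ell v,e)-\fraka_T(\mathcal I_H v,e)\bigr)$ and tests \eqref{eq:defKTell1} with $(1-\eta_T)e\in V_T^\ell$. What remains is $-\frakb((1-\eta_T)e,\lambda_T^\ell)+\fraka(\mathcal K_T^\ell v,\eta_T e)$. This lives on the single ring $R_T$ automatically, since the inner part of $\lambda_T^\ell$ is annihilated because $e\in W$. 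It is bounded via the decay of the \emph{localized} $\mathcal K_T^\ell v$ inside its own patch, plus the inf--sup-based stability of $\mathcal K_T^\ell$.

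The paper's argument needs one cut-off, no C\'ea-type comparison, and no stability of the global $\mathcal K_T$, and it gets the ring structure, hence $\ell^{(d-1)/2}$, for free. Your classical local-error-plus-summation argument is longer and needs the two-sided orthogonality above. In exchange, it gives the elementwise bound $\|D(\mathcal K_T-\mathcal K_T^\ell)v\|_{L^2}\lesssim \exp(-c\ell)\|v\|_{H^1(\Nb(T))}$ as a by-product.
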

\begin{proof} 
The proof, which is an adaptation of that of~\cite[Thm.~5.1]{Hauck2026} to the present setting, is included for completeness. We abbreviate the localization error by \( e \coloneqq (\mathcal{R} - \mathcal{R}^\ell)v \) and note that \( e \in W \). Indeed, by \cref{defK,eq:defKT2,eq:defKTell2,eq:defRl}, we obtain $\mathfrak{b}(e,\mu)=0$ for all $\mu \in M_H$. Here we used that $b(\mathcal K_T^\ell v,\mu) = b(\mathcal K_T^\ell v,\mu|_{\mathcal F_T^\ell})$ for all $\mu \in M_H$, which follows from the definitions of $V_T^\ell$ and $M_T^\ell$ in~\cref{eq:locspaces}. We interpret $\mu|_{\mathcal F_T^\ell}$ as the restriction to the union of all facets in $\mathcal F_T^\ell$.
Thus,  
\begin{equation}
\begin{aligned}\label{eq:spliterror}
	\alpha\|De\|_{L^2}^2 
    \leq \fraka(e,e) = -\fraka(\mathcal R^\ell v,e) 
    =  \sum_{T \in \mathcal{T}_H} \Big(-\fraka_T(\mathcal I_Hv,e) + \fraka(\mathcal K_T^\ell v,e)\Big).
\end{aligned}
\end{equation}
In the following, we estimate each term in the summation on the right hand side separately.
Let $\eta_T\in W^{1,\infty}(\Omega)$ denote the cut-off function from \cref{eq:eta} with $S=T$. Noting that \( \eta_T =0 \) on \( T \), and using \cref{eq:defKTell1} with the test function \(w =  (1 - \eta_T)e \in V_T^\ell \), we find
\begin{align*}
	-\fraka_T(\mathcal I_Hv,e)+\fraka(\mathcal K_T^\ell v,e) &= -\fraka_T(\mathcal I_Hv,(1-\eta_T)e)+\fraka(\mathcal K_T^\ell v,(1-\eta_T)e + \eta_T e)\\
	&= - \frakb((1-\eta_T) e,\lambda_T^\ell) +\fraka(\mathcal K_T^\ell v,\eta_T e)\eqqcolon \Theta_1+\Theta_2.
\end{align*}
To estimate the term $\Theta_1$, we decompose~\( \lambda^\ell_T = \lambda^\mathrm{in}_T + \lambda^\mathrm{out}_T \), where 
\begin{equation*} 
    \lambda^\mathrm{in}_T = \lambda_T^\ell\vert_{\mathcal F_T^{\ell-1}},\qquad \lambda^\mathrm{out}_T = \lambda^\ell_T|_{\mathcal F_H \setminus \mathcal F_T^{\ell-1}}, 
\end{equation*}
both extended by zero to all other facets so that $\lambda^\mathrm{in}_T,\lambda^\mathrm{out}_T\in M_H$. Since \( \eta_T  = 0 \) in  \( \mathsf{N}^{\ell-1}(T) \), we have that \( \frakb((1-\eta_T)e, \lambda_T^\mathrm{in}) = \frakb(e, \lambda_T^\mathrm{in}) = 0 \) by \eqref{eq:defKT2} and~\eqref{eq:defKTell2}. Thus, with $R_T \coloneqq \operatorname{int}\big( \Nb^{\ell}(T) \setminus \Nb^{\ell - 1}(T) \big)$, we obtain with the same arguments as for $\Xi_2$ in the proof of \cref{thm:dec} that
\begin{align*}
	\Theta_1 &= -\frakb((1-\eta_T)e,\lambda_T^\mathrm{out}) \\&= -\sum_{F \in \mathcal F_H \with F\subset R_T} \int_F (1-\eta_T)e\cdot n_F \lambda_T^\mathrm{out}\ds
    \\&
    \lesssim \|D (\mathcal K_T^\ell v)\|_{L^2(R_T)}\|De\|_{L^2(R_T)}.
\end{align*}
Turning to $\Theta_2 = \fraka(\mathcal K_T^\ell v,\eta_T e)$, we remark that the element contributions to this term also vanish for all the mesh elements outside $R_T$. As above, we arrive at 
\begin{align*}
	\Theta_2 \lesssim  \|D(\mathcal K_T^\ell v)\|_{L^2(R_T)}\big(\|De\|_{L^2(R_T)}+ \|e\|_{L^2(R_T)}\big)
        \lesssim \|D(\mathcal K_T^\ell v)\|_{L^2(R_T)}\|De\|_{L^2(R_T)}.
\end{align*}
Combining the estimates for $\Theta_1$ and $\Theta_2$, we obtain
\begin{equation}\label{eq:bounde}
\begin{aligned}
	\|De\|_{L^2}^2 &\lesssim  \sum_{T \in \TH} \|D(\mathcal K_T^\ell v)\|_{L^2(R_T)}\|De\|_{L^2(R_T)} \\&\lesssim  \exp(-c\ell) \sum_{T \in \TH}\|D(\mathcal K_T^\ell v)\|_{L^2(\Nb^{\ell}(T))}\|De\|_{L^2(R_T)},
\end{aligned}
\end{equation}
where we have applied~\cref{thm:dec} to \cref{eq:defKTell}, treating the patch $\mathsf N^\ell(T)$ as the whole domain.
To pass from the norm of $\mathcal K_T^\ell v$ to that of $v$, we again use classical saddle point theory (see, e.g.,~\cite[Cor.~4.2.1]{BofBF13}), 
noting that the inf--sup constant of $\frakb$ for the pair $(V_T^\ell,M_T^\ell)$ is of order $H^{1/2}$ following the arguments in \cref{infsupbglob}. Hence, 
\begin{equation*}
\begin{aligned}
   \| D(\mathcal{K}_T^{\ell} v)\|_{L^2(\Nb^{\ell} (T))} & \lesssim
  \sup_{w \in V_T^\ell\backslash\{0\}}\frac{\fraka_T (\mathcal{I}_H v, w)}{\| Dw
  \|_{L^2}} + H^{-1/2} \sup_{\mu \in M_T^\ell\backslash\{0\}} \frac{\frakb_T (v - \mathcal I_Hv,
  \mu)}{\| \mu \|_{L^2(\Sigma)}}\\
  & \lesssim  \| v\|_{H^1(\Nb (T))}
\end{aligned}
\end{equation*}
where, in the last step, to estimate the first term involving $\mathfrak{a}_T$, we used the $L^\infty$-bounds on the coefficients $A$ and $b$, the properties of $\mathcal I_H$ as stated in \cref{eq:propIH}, and the Poincaré inequality for $w$, cf.~\cref{Cp}. The second term involving $\mathfrak{b}_T$ is treated analogously, additionally invoking the classical trace inequality from~\cref{tr ineq}. 

Returning to~\eqref{eq:bounde}, we conclude that
\begin{align*}
    \|De\|_{L^2}^2&\lesssim  \exp(-c\ell) \sqrt{\sum_{T \in \TH} \|v\|_{H^1(\Nb(T))}^2}\sqrt{\sum_{T \in \TH} \|De\|_{L^2(R_T)}^2} \\
    & \lesssim  \ell^{(d-1)/2}\exp(-c\ell) \|Dv\|_{L^2} \|De\|_{L^2}, 
\end{align*}
where we have used the fact that each element $K\in\TH$ belongs to at most $\mathcal O(\ell^{d-1})$ rings $R_T$ for different $T\in\TH$, as well as the Poincaré inequality for $v$, cf.~\cref{Cp}. Dividing by $\|De\|_{L^2}$ yields  the assertion.
\end{proof}
The remainder of this section is devoted to the localization of the operator $\mathcal Q\colon Y_H \to W$ defined in~\eqref{eq:addcorr}. As above, for a given $y_H \in Y_H$, the function $\mathcal Q y_H$ is characterized by the solution pair $(\mathcal Q y_H,\zeta) \in V \times M_H$ satisfying
\begin{subequations} 
	\begin{align*}
		&\fraka (\mathcal Q y_H, w) \hspace*{-1ex} & + \hspace*{-1ex} &  &\frakb(w,\zeta) \hspace*{-1ex} & &= \;\; &(y_H,A : Dw + b\cdot w)_{L^2}&&\text{for all }  w \in V, &&\\
		&\frakb(\mathcal Q y_H,\mu)                   &   &         &    & &= \;\;&0&&\text{for all }  \mu \in M_H. &&
	\end{align*}
\end{subequations}
For any $T \in \mathcal T_H$, we define the corresponding element contributions $(\mathcal Q_T y_H,\zeta_T) \in V \times M_H$ as the solution to
\begin{subequations}
	\begin{align*}
		&\fraka (\mathcal Q_T y_H, w) \hspace*{-1.5ex} & + \hspace*{-1.5ex} &  &\frakb(w,\zeta_T) \hspace*{-1ex} & &= \;\; &(y_H,A : Dw + b\cdot w)_{L^2(T)}&&\hspace{-0.17cm}\text{for all } w \in V, &&\\
		&\frakb(\mathcal Q_T y_H,\mu)                   &   &         &    & &= \;\;&0&&\hspace{-0.17cm}\text{for all } \mu \in M_H.&&
	\end{align*}
\end{subequations}
Note that $\mathcal Q = \sum_{T\in \mathcal T_H} \mathcal Q_T$. Moreover, the operators $\mathcal Q_T$ exhibit exponential decay away from $T$, as a direct consequence of~\cref{thm:dec}. Their localized counterparts are obtained from the solution pair $(\mathcal Q_T^\ell y_H,\zeta_T^\ell) \in V_T^\ell \times M_T^\ell$ to
\begin{subequations}
	\label{eq:defQTell} 
	\begin{align}
		&\fraka (\mathcal Q_T^\ell y_H, w) \hspace*{-1.5ex} & + \hspace*{-1.5ex} &  &\frakb(w,\zeta_T^\ell) \hspace*{-1ex} & &= \;\; &(y_H,A : Dw + b\cdot w)_{L^2(T)}&&\hspace{-0.17cm}\text{for all } w \in V_T^\ell, &&\label{eq:defR13}\\
		&\frakb(\mathcal Q_T^\ell y_H,\mu)                   &   &         &    & &= \;\;&0&&\hspace{-0.17cm}\text{for all } \mu \in M_T^\ell. \label{eq:defR23}&&
	\end{align}
\end{subequations}
For the localized version $\mathcal Q^\ell \coloneqq \sum_{T \in \mathcal T_H} \mathcal Q_T^\ell$ of the operator $\mathcal Q$, the following exponential approximation result holds.
\begin{theorem}[Localization error for $\mathcal Q$]
	\label{thm:locerrQ}
        For all \( y_H \in Y_H \) and \( \ell \in \mathbb{N} \), we have 
		\begin{equation*}
			\|D(\mathcal{Q}y_H - \mathcal{Q}^\ell y_H)\|_{L^2} \lesssim \ell^{(d-1)/2} \exp(-c \ell)\|y_H\|_{L^2},
		\end{equation*}
		where \( c > 0 \) is the constant from \cref{thm:dec}.
\end{theorem}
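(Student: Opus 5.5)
We follow the proof of \cref{thm:locerr} almost verbatim, replacing $\mathcal K_T^\ell$ by $\mathcal Q_T^\ell$ and the data $\fraka_T(\mathcal I_H v,\cdot)$, $\frakb_T(v-\mathcal I_H v,\cdot)$ by $(y_H,A:D\,\cdot+b\cdot\,\cdot)_{L^2(T)}$ and $0$.

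Set $e\coloneqq \mathcal Q y_H-\mathcal Q^\ell y_H$. We first check that $e\in W$. Indeed, $\mathcal Q y_H\in W$ by definition. For each $T$, the function $\mathcal Q_T^\ell y_H\in V_T^\ell$ satisfies $\frakb(\mathcal Q_T^\ell y_H,\mu)=\frakb(\mathcal Q_T^\ell y_H,\mu|_{\mathcal F_T^\ell})=0$ for all $\mu\in M_H$, by \eqref{eq:defR23} and the support argument used in \cref{thm:locerr}. Coercivity then gives
\begin{equation*}
\alpha\|De\|_{L^2}^2\le \fraka(e,e)=\sum_{T\in\TH}\Big((y_H,A:De+b\cdot e)_{L^2(T)}-\fraka(\mathcal Q_T^\ell y_H,e)\Big),
\end{equation*}
where we used $\fraka(\mathcal Q y_H,e)=(y_H,A:De+b\cdot e)_{L^2}$ (since $e\in W$) and split this integral over elements.

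For fixed $T$, let $\eta_T$ be the cut-off from \eqref{eq:eta} with $S=T$, and write $e=(1-\eta_T)e+\eta_T e$. Since $\eta_T=0$ on $T$, we have $(y_H,\cdot)_{L^2(T)}$ applied to $\eta_T e$ equal to zero. Testing \eqref{eq:defR13} with $w=(1-\eta_T)e\in V_T^\ell$ cancels the remaining load term, so the $T$-summand equals
\begin{equation*}
\frakb((1-\eta_T)e,\zeta_T^\ell)-\fraka(\mathcal Q_T^\ell y_H,\eta_T e).
\end{equation*}
We split $\zeta_T^\ell$ into inner and outer parts as before. The inner part pairs with $e\in W$ to give zero. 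The outer part lives on faces in the ring $R_T$, and we bound it by testing \eqref{eq:defR13} with bubbles $\rho_F$ for $F\subset R_T$. Here the load term vanishes since $\omega_F\cap T$ has measure zero for $\ell\ge 2$; for small $\ell$ the estimate is trivial by adjusting constants. Together with the trace inequality \eqref{tr ineq} and \cref{lem:poincare}, this yields $\lesssim \|D\mathcal Q_T^\ell y_H\|_{L^2(R_T)}\|De\|_{L^2(R_T)}$. The second term is bounded the same way, since it is supported in $R_T\cup(\Omega\setminus\Nb^\ell(T))$ and $\mathcal Q_T^\ell y_H$ vanishes outside $\Nb^\ell(T)$.

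Next, \cref{thm:dec} applied on the patch $\Nb^\ell(T)$, with $S=T$, $f_S=(y_H,\cdot)_{L^2(T)}$ and $g_S=0$, gives
\begin{equation*}
\|D\mathcal Q_T^\ell y_H\|_{L^2(R_T)}\lesssim \exp(-c\ell)\|D\mathcal Q_T^\ell y_H\|_{L^2}.
\end{equation*}
The key remaining step is the local stability bound $\|D\mathcal Q_T^\ell y_H\|_{L^2}\lesssim\|y_H\|_{L^2(T)}$. Since the constraint data vanish, we can argue directly: $\mathcal Q_T^\ell y_H$ lies in $W\cap V_T^\ell$, and testing \eqref{eq:defR13} with $w=\mathcal Q_T^\ell y_H$ kills the multiplier term. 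Coercivity and the bound $\|w\|_{L^2}\le C_\mathrm{P}^{-1}\|Dw\|_{L^2}$ then give
\begin{equation*}
\alpha\|D\mathcal Q_T^\ell y_H\|^2\le(\|A\|_{L^\infty}+C_\mathrm{P}^{-1}\|b\|_{L^\infty})\|y_H\|_{L^2(T)}\|D\mathcal Q_T^\ell y_H\|.
\end{equation*}
This avoids the inf--sup argument altogether.

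Finally, summing over $T$ and applying Cauchy--Schwarz, we use $\sum_T\|y_H\|_{L^2(T)}^2=\|y_H\|_{L^2}^2$ together with the overlap count $\mathcal O(\ell^{d-1})$ for the rings $R_T$. Dividing by $\|De\|_{L^2}$ gives the claimed bound. The only delicate point is the bubble estimate for $\zeta_T^\ell$ on ring faces. There we must make sure the load functional does not contribute, which requires checking that $\omega_F$ does not meet $T$ when $F\subset R_T$; this holds for $\ell\ge2$, and the cases $\ell=1$ are absorbed into the constant.
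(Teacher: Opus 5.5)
Your proposal is correct and follows the paper's proof essentially step for step. It uses the same cut-off splitting of $\fraka(e,e)$ with $e\in W$, the same elimination of the inner part of $\zeta_T^\ell$, and \cref{thm:dec} on the patch $\Nb^\ell(T)$. It also uses the same direct coercivity argument (testing with $\mathcal Q_T^\ell y_H$ itself) for local stability, followed by Cauchy--Schwarz and the $\mathcal O(\ell^{d-1})$ overlap count.

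Your caveat about $\ell=1$ is harmless but unnecessary. A face $F$ on which $\frakb((1-\eta_T)e,\zeta_T^\ell)$ can be nonzero is never contained in $\Nb^{\ell-1}(T)$, because there $1-\eta_T=1$ and $e\in W$. Hence no element of $\omega_F$ lies in $\Nb^{\ell-1}(T)\supseteq T$, and the load term vanishes for every $\ell\ge 1$.
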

\begin{proof} 
    The proof is very similar to the proof of \cref{thm:locerr} and is therefore reduced to the essential steps. The missing precise arguments are exactly as in the above proof.  Let $e \coloneqq (\mathcal Q - \mathcal Q^\ell) y_H$. With the cut-off function from \cref{eq:eta} with $S=T$, now denoted by \( \eta_T \), we obtain
    \begin{align*}
        \alpha \|De\|_{L^2}^2 &\leq \fraka((\mathcal Q - \mathcal Q^\ell)y_H,e) = (y_H,A:De + b\cdot e)_{L^2}-\fraka(\mathcal Q^\ell y_H,e)\\
        & = \sum_{T \in \mathcal T_H} \Big(-\fraka(\mathcal Q_T^\ell y_H,(1-\eta_T)e + \eta_Te) \\[-1.5ex]
        &\qquad\qquad\quad + (y_H,A:D[(1-\eta_T)e] + b\cdot (1-\eta_T) e)_{L^2(T)}\Big)\\
        & = \sum_{T \in \mathcal T_H} \frakb((1-\eta_T)e,\zeta_T^\ell\vert_{\Sigma\setminus\Nb^{\ell-1}(T)})  - \fraka(\mathcal Q_T^\ell y_H,\eta_Te)\\
        & \lesssim \sum_{T \in \TH} \|D(\mathcal Q_T^\ell y_H)\|_{L^2(R_T)}\|De\|_{L^2(R_T)} \\&\lesssim  \exp(-c\ell) \sum_{T \in \TH}\|D(\mathcal Q_T^\ell y_H)\|_{L^2(\Nb^{\ell}(T))}\|De\|_{L^2(R_T)},
    \end{align*}
    where we have applied~\cref{thm:dec}  to~\cref{eq:defQTell}, treating $\mathsf N^\ell(T)$ as the whole domain.  

    The result then follows as in \cref{thm:locerr}, using a discrete Cauchy--Schwarz inequality and the stability of $\mathcal Q_T^\ell$. The latter holds since
    \begin{align*}
    \alpha \|D (\mathcal Q_T^\ell y_H)\|_{L^2(\mathsf N^\ell(T))}^2 &\leq 
        \fraka(\mathcal Q_T^\ell y_H, \mathcal Q_T^\ell y_H)\\ &= (y_H, A : D (\mathcal Q_T^\ell y_H) + b \cdot (\mathcal Q_T^\ell y_H))_{L^2(T)}\\& 
        \leq \|y_H\|_{L^2(T)} (\|A\|_{L^{\infty}} + C_\mathrm{P}^{-1} \|b\|_{L^{\infty}}) \|D (\mathcal Q_T^\ell y_H)\|_{L^2(\mathsf N^\ell(T))},
    \end{align*}
    which concludes the proof. 
\end{proof}

\section{Practical multiscale method}\label{sec:pracmethod}

In this section, we introduce a practical multiscale method based on locally computed basis functions. This is justified by the exponential decay of the globally defined prototypical basis functions and the localization results in \cref{thm:locerr} and \cref{thm:locerrQ}. For each $F\in \mathcal{F}_H$, we define the localized basis function
\begin{equation}
	\label{eq:locbasis}
	\tilde \varphi_{F}^\ell \coloneqq \mathcal{R}^\ell \rho_{F}.
\end{equation}
We define the localized multiscale space as their span, i.e., 
\begin{equation*}
	\tilde V_H^\ell \coloneqq \operatorname{span}\{\tilde \varphi_{F}^\ell\with F \in \mathcal{F}_H\}.
\end{equation*}
We emphasize that the operator \( \mathcal{R}^\ell \) depends on its argument only through the quantities of interest, as introduced in \cref{defQOI}. Consequently, the definition~\cref{eq:locbasis} of~\( \tilde\varphi^\ell_{F} \) is independent of the specific choice of bubble functions, provided the Kronecker-delta condition~\cref{eq:deltaprop} on the quantities of interest is satisfied. 

The localized multiscale method now seeks the unique function $\tilde{z}_H\in \tilde V_H^\ell$ such that
\begin{align}\label{eq:locmethod}
	\forall \tilde v_H^\ell \in \tilde V_H^\ell:\quad\fraka(\tilde{z}_H^\ell,\tilde v_H^\ell)=(f,A : D\tilde v_H^\ell + b\cdot \tilde v_H^\ell)_{L^2}.
\end{align}
As a post-processing step, we then define
\begin{equation}\label{eq:postprocessed}
    \hat{z}_H^\ell \coloneqq \tilde{z}_H^\ell + \mathcal Q^\ell (\Pi_H f).
\end{equation}
The following theorem provides convergence results for the presented method.
\begin{theorem}[Localized  method]\label{thm:convergenceloc} Let $z \in V$ be the solution to~\cref{z prob} and let $\hat z_H^\ell$ be the post-processed multiscale approximation defined in \cref{eq:postprocessed}. Then, we have the error bounds
	\begin{align}
		\| D(z - \hat{z}_H^{\ell})\|_{L^2} & \lesssim  H^{p+1} \lvert f\rvert_{H^{p+1}} +  \ell^{(d-1)/ 2} \exp (- c \ell) \|f\|_{L^2},  \label{eq:errestpracH1}\\
\|z - \hat{z}_H^{\ell} \|_{L^2} & \lesssim H^{p+2}|f|_{H^{p+1}}+ \ell^{(d-1)/ 2} \exp (- c \ell) \|f\|_{L^2}.  \label{eq:errestpracL2}
	\end{align}
\end{theorem}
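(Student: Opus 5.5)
We compare the localized approximation with the ideal one from \cref{thm:convideal} and show that the difference is exponentially small in $\ell$. Since $z-\tilde z_H\in W$ and $\tilde z_H\in\tilde V_H$, the ideal Galerkin solution is the $\fraka$-orthogonal projection of $z$, i.e.\ $\tilde z_H=\mathcal R z$. Likewise, $\tilde z_H^\ell=\mathcal P^\ell z$, where $\mathcal P^\ell$ denotes the $\fraka$-orthogonal projection onto $\tilde V_H^\ell$. We split
\begin{align*}
z-\hat z_H^\ell = (z-\hat z_H) + (\mathcal R z-\mathcal P^\ell z) + (\mathcal Q-\mathcal Q^\ell)(\Pi_H f).
\end{align*}
The first term is handled by \cref{thm:convideal}. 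The third term is bounded by \cref{thm:locerrQ} by $\ell^{(d-1)/2}\exp(-c\ell)\|\Pi_H f\|_{L^2}$, and $\|\Pi_H f\|_{L^2}\lesssim\|f\|_{L^2}$ follows from \eqref{eq:approxf} with $k=0$.

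For the middle term we first record two structural facts about $\mathcal R^\ell$. First, $\mathcal R^\ell$ depends on its argument only through the quantities of interest, so $\mathcal R^\ell v=\sum_F\langle q_F,v\rangle\tilde\varphi_F^\ell\in\tilde V_H^\ell$ for every $v\in V$. Second, the proof of \cref{thm:locerr} shows $(\mathcal R-\mathcal R^\ell)v\in W$. Since also $\mathcal R v-v\in W$, we get $\langle q_F,\mathcal R^\ell v\rangle=\langle q_F,v\rangle$ for all $F$. Hence $\mathcal R^\ell$ is idempotent, and $\mathcal R^\ell v=v$ for all $v\in\tilde V_H^\ell$.

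We then write
\begin{align*}
\mathcal P^\ell z-\mathcal R z=\mathcal P^\ell(z-\mathcal R z)+(\mathcal P^\ell\mathcal R z-\mathcal R z).
\end{align*}
The second part is bounded in the energy norm by the best-approximation property. Using the candidate $\mathcal R^\ell z\in\tilde V_H^\ell$ and \cref{thm:locerr}, it is at most a multiple of $\|D(\mathcal R z-\mathcal R^\ell z)\|_{L^2}\lesssim \ell^{(d-1)/2}e^{-c\ell}\|Dz\|_{L^2}$.

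The first part is the step I expect to need the most care. Set $w\coloneqq z-\mathcal Rz\in W$ and take any $v\in\tilde V_H^\ell$. Since $\fraka(w,\mathcal R v)=0$ and $v=\mathcal R^\ell v$, we have
\begin{align*}
\fraka(w,v)=\fraka(w,(\mathcal R^\ell-\mathcal R)v)\lesssim \ell^{(d-1)/2}e^{-c\ell}\|Dw\|_{L^2}\|Dv\|_{L^2}.
\end{align*}
Testing with $v=\mathcal P^\ell w$ and using coercivity gives $\|D\mathcal P^\ell w\|_{L^2}\lesssim \ell^{(d-1)/2}e^{-c\ell}\|Dw\|_{L^2}$. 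Moreover, $\|Dw\|_{L^2}\lesssim\|Dz\|_{L^2}$ by stability of the projection $\mathcal R$.

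Finally, $\|Dz\|_{L^2}\lesssim\|f\|_{L^2}$ by Lax--Milgram (\cref{Lmm: char of grad u}). Collecting the three terms yields \eqref{eq:errestpracH1}. For \eqref{eq:errestpracL2}, we apply \eqref{eq:idealL2error} to the ideal term, which gives $H\cdot H^{p+1}|f|_{H^{p+1}}$. The two exponentially small terms are converted from $\|D\cdot\|_{L^2}$ to $\|\cdot\|_{L^2}$ by the global Poincar\'e inequality \eqref{Cp}, so they keep the factor $\ell^{(d-1)/2}\exp(-c\ell)\|f\|_{L^2}$.
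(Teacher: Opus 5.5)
Your proof is correct. For \eqref{eq:errestpracH1} it is essentially the paper's argument in projection language. The paper splits $\tilde z_H-\tilde z_H^\ell$ through $\mathcal R^\ell\tilde z_H$ and controls the discrete part by coercivity on $\tilde V_H^\ell$ together with the residual $\fraka(\tilde z_H,v)-(f,A:Dv+b\cdot v)_{L^2}$. Since $(f,A:Dv+b\cdot v)_{L^2}=\fraka(z,v)$, this residual is exactly your $-\fraka(z-\mathcal Rz,v)=-\fraka(z-\mathcal Rz,(\mathcal R^\ell-\mathcal R)v)$. Both versions rest on \cref{thm:locerr} and on the identity $\mathcal R^\ell v=v$ for $v\in\tilde V_H^\ell$. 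You have the advantage of actually deriving that identity, from the dependence of $\mathcal R^\ell$ on the quantities of interest and from $(\mathcal R-\mathcal R^\ell)v\in W$; the paper only asserts it.

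The genuine difference is in \eqref{eq:errestpracL2}. The paper uses an Aubin--Nitsche-type duality argument with the auxiliary solution $r$ of \eqref{eq:auxsol} and its ideal approximation $\tilde r_H$. You instead combine the triangle inequality, \eqref{eq:idealL2error} for $z-\hat z_H\in W$, and the global Poincar\'e inequality \eqref{Cp} for the two localization errors. Your route is shorter and yields exactly the stated bound. The duality argument only gains an extra factor $H$ on the $\mathcal Q$-localization term. The contribution of $\tilde z_H-\tilde z_H^\ell$ is still estimated there by the global Poincar\'e inequality, with no power of $H$, so duality does not sharpen \eqref{eq:errestpracL2}.
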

\begin{proof}
    Let $\hat z_H$ be the solution to~\cref{eq:idealmod} and $\tilde z_H\in \tilde{V}_H$ the solution to~\cref{eq:protmethod}. By the triangle inequality, we have
    \begin{equation}\label{eq:spliterr}
        \| D( z - \hat z_H^\ell)\|_{L^2} \leq \| D(z - \hat z_H)\|_{L^2} + \| D(\mathcal Q(\Pi_H f) - \mathcal Q^\ell(\Pi_H f))\|_{L^2} + \|D(\tilde z_H - \tilde z_H^\ell)\|_{L^2}.
    \end{equation}
    The first two terms can be bounded with \cref{thm:convideal} and \cref{thm:locerrQ}, respectively. For the third term, we observe with $\tilde z_H = \mathcal R \tilde z_H$ that
    \begin{equation}\label{eq:spliterr2}
        \| D(\tilde z_H - \tilde z_H^\ell) \|_{L^2} \leq \| D( \mathcal R \tilde z_H - \mathcal R^\ell \tilde z_H )\|_{L^2} + \|D(\mathcal R^\ell \tilde z_H - \tilde z_H^\ell)\|_{L^2}. 
    \end{equation}
    The first term can be estimated with Theorem~\ref{thm:locerr}. For the second term in~\eqref{eq:spliterr2}, using that $v = \mathcal R^\ell v$ for $v \in \tilde V_H^\ell$ and writing $\tilde L \varphi \coloneqq A:D\varphi + b\cdot \varphi$ for $\varphi\in V$, we obtain
    \begin{align*}
        \| D(&\mathcal R^\ell \tilde z_H - \tilde z_H^\ell) \|_{L^2}\\ &\lesssim \sup_{v \in \tilde V_H^\ell \setminus \{0\}} \frac{\fraka(\mathcal R^\ell \tilde z_H - \tilde z_H^\ell, v)}{\|D v\|_{L^2}} \\
        & = \sup_{v \in \tilde V_H^\ell \setminus \{0\}} \frac{\fraka(\mathcal R^\ell \tilde z_H - \mathcal R \tilde z_H + \tilde z_H - \tilde z_H^\ell, v)}{\|D v\|_{L^2}}\\
        & \lesssim \| D( \mathcal R \tilde z_H - \mathcal R^\ell \tilde z_H)\|_{L^2} +  \sup_{v \in \tilde V_H^\ell \setminus \{0\}} \frac{\fraka(\tilde z_H, v) - (f, \tilde L v)_{L^2}}{\|D v\|_{L^2}}\\
        &= \| D( \mathcal R \tilde z_H - \mathcal R^\ell \tilde z_H)\|_{L^2}  +  \sup_{v \in \tilde V_H^\ell \setminus \{0\}} \frac{\fraka(\tilde z_H, \mathcal R^\ell v - \mathcal R v) - (f, \tilde L (\mathcal R^\ell v - \mathcal R v))_{L^2}}{\|D v\|_{L^2}}\\
        &\lesssim \ell^{(d-1)/ 2} \exp (- c \ell) \|f\|_{L^2},
    \end{align*}
    where we have used \cref{thm:locerr} and the stability of $\tilde z_H$ in the final step. 
    
    Now, we combine this with~\eqref{eq:spliterr2} and once again \cref{thm:locerr}, which yields
    \begin{equation}\label{eq:boundidealloc}
        \| D(\tilde z_H - \tilde z_H^\ell) \|_{L^2} \lesssim \ell^{(d-1)/ 2} \exp (- c \ell) \|f\|_{L^2}.
    \end{equation}
    Finally, we plug this into~\eqref{eq:spliterr} and use \cref{thm:convideal} and \cref{thm:locerrQ} to obtain
    \begin{equation*}
        \| D( z - \hat z_H^\ell)\|_{L^2} \lesssim H^{p+1} \lvert f\rvert_{H^{p+1}} + \ell^{(d-1)/ 2} \exp (- c \ell) \|f\|_{L^2},
    \end{equation*}
    which is the claimed bound \eqref{eq:errestpracH1}.

    To show the $L^2$-estimate, we define $r \in V$ as the auxiliary solution to
    \begin{equation}\label{eq:auxsol}
        \forall v\in V\colon \quad\fraka(r,v) = (z - \hat z_H^\ell,v)_{L^2}.
    \end{equation}
    Further, $\tilde r_H \in \tilde V_H$ is the corresponding ideal multiscale approximation that solves
    \begin{equation*}
        \forall \tilde v_H\in \tilde V_H\colon \quad\fraka(\tilde r_H,\tilde v_H) = (z - \hat z_H^\ell,\tilde v_H)_{L^2}.
    \end{equation*}
    Note that $r - \tilde r_H \in W$ by the Galerkin orthogonality. Therefore, we have with \cref{lem:poincare} that
    \begin{equation*}
        \|D(r - \tilde r_H)\|_{L^2}^2 \lesssim \fraka(r - \tilde r_H, r - \tilde r_H) = (z - \hat z_H^\ell,r - \tilde r_H)_{L^2} \lesssim H\| z - \hat z_H^\ell\|_{L^2} \|D(r - \tilde r_H)\|_{L^2}  
    \end{equation*}
    and thus, 
    \begin{equation}\label{eq:errr}
        \|D(r - \tilde r_H)\|_{L^2} \lesssim H \| z - \hat z_H^\ell\|_{L^2}.
    \end{equation}
    Using $v = z - \hat z_H^\ell$ as a test function in~\eqref{eq:auxsol}, we obtain with~\cref{eq:errr}, \cref{thm:convideal}, and \cref{thm:locerrQ} that
    \begin{align*}
        \|z - \hat z_H^\ell\|_{L^2}^2 & = \fraka(r,z - \hat z_H^\ell) \\&= \fraka(r,z - \hat z_H) + \fraka(r,\mathcal Q (\Pi_H f) - \mathcal Q^\ell (\Pi_Hf)) + \fraka(r,\tilde z_H - \tilde z_H^\ell) \\& = \fraka(r - \tilde r_H,z - \hat z_H) + \fraka(r-\tilde r_H,\mathcal Q (\Pi_H f) - \mathcal Q^\ell (\Pi_Hf)) + \fraka(r,\tilde z_H - \tilde z_H^\ell)\\&\lesssim  H \| z - \hat z_H^\ell\|_{L^2} \big(\| D(z - \hat z_H)\|_{L^2} + \ell^{(d-1)/ 2} \exp (- c \ell) \|f\|_{L^2}\big)    \\&\qquad+ \| z - \hat z_H^\ell\|_{L^2}\| \tilde z_H - \tilde z_H^\ell\|_{L^2}.
    \end{align*}
   Using the Poincar\'e-type inequality~\cref{Cp} and employing~\cref{eq:boundidealloc}, we finally get
    \begin{align*}
        \|z - \hat z_H^\ell\|_{L^2}^2 &\lesssim \| z - \hat z_H^\ell\|_{L^2} \big(H \| D(z - \hat z_H)\|_{L^2} + H\ell^{(d-1)/ 2} \exp (- c \ell) \|f\|_{L^2} \\&\qquad+ \ell^{(d-1)/ 2} \exp (- c \ell) \|f\|_{L^2}\big).
    \end{align*}
    In view of \eqref{eq:idealH1error}, we obtain the claimed bound \eqref{eq:errestpracL2}.
\end{proof}

The practical multiscale method analyzed in Theorem \ref{thm:convergenceloc} produces an approximation $\hat z_H^\ell$ to $z = \nabla u$ in the $H^1$-norm, where we recall that $u\in H^2(\Omega)\cap H^1_0(\Omega)$ is the unique strong solution to \eqref{u prob}. Let us conclude this section with a brief discussion on the recovery of an approximation to $u$ in the $L^2$-norm.

In the spirit of Lemma \ref{Lmm: recovery}, let $\hat u\in H^2(\Omega)\cap H^1_0(\Omega)$ denote the unique solution to the problem
\begin{align}\label{uhat}
\left\{
\begin{aligned}
\Delta \hat u &= \nabla\cdot \hat z_H^\ell\quad\text{in }\Omega,\\
    \hat u &= 0 \qquad\quad\text{ on }\partial\Omega,
\end{aligned}
\right.
\end{align}
and let $\hat u_{\mathcal{H}}$ denote an $H^1_0(\Omega)$-conforming $\mathcal{P}^1$-finite element approximation to $\hat u$ on a (coarse) mesh $\mathcal{T}_{\mathcal{H}}$. Here, $\mathcal{T}_{\mathcal{H}}$ is an element of a quasi-uniform and shape-regular family $\{\mathcal{T}_{\mathcal{H}}\}_{\mathcal{H} > 0}$ of geometrically conformal meshes of $\Omega$, where $\mathcal{H}\in (0,H]$ denotes the maximum diameter of elements in $\mathcal{T}_{\mathcal{H}}$. Note that $\mathcal{T}_{\mathcal{H}} = \mathcal{T}_{H}$ is an admissible choice. In view of Theorem \ref{Thm: Wp of u prob}, Lemma \ref{Lmm: recovery}, and \eqref{uhat}, we have that $\|u-\hat u \|_{H^1}\lesssim \|z-\hat z_H^\ell\|_{L^2}$ and
\begin{align*}
    \|\hat u-\hat u_{\mathcal{H}}\|_{L^2} + \mathcal{H}\, \lvert \hat u-\hat u_{\mathcal{H}} \rvert_{H^1}  \lesssim \mathcal{H}^2 \|\nabla\cdot \hat z_H^\ell\|_{L^2} \lesssim  \mathcal{H}^2\|D( z-\hat z_H^\ell)\|_{L^2} +  \mathcal{H}^2 \| f\|_{L^2}.
\end{align*}
Hence, by the triangle inequality and Theorem \ref{thm:convergenceloc}, we obtain the $L^2$-error bound
\begin{align*}
    \|u - \hat u_{\mathcal{H}}\|_{L^2} \lesssim H^{p+2}|f|_{H^{p+1}}+ \left(\ell^{(d-1)/ 2} \exp (- c \ell) + \mathcal{H}^2\right) \|f\|_{L^2}.
\end{align*}
In particular, if $\mathcal{T}_{\mathcal{H}}$ is constructed such that $\mathcal{H} \lesssim H^{1+\frac{p}{2}}$, then
\begin{align*}
    \|u - \hat u_{\mathcal{H}}\|_{L^2} \lesssim H^{p+2}\|f\|_{H^{p+1}}+ \ell^{(d-1)/ 2} \exp (- c \ell) \|f\|_{L^2}.
\end{align*}

\section{Implementation and numerical experiments}
\label{sec:numexp}

In this section, we discuss the implementation of the proposed multiscale method and present numerical experiments supporting the theoretical results of this paper. 
Note that all numerical experiments can be reproduced using the code available at \url{https://github.com/moimmahauck/NonDiv_LOD}.

\subsection{Choice of quasi-interpolation $\boldsymbol{\mathcal I_H}$}
For the definition of the operator $\mathcal K$ in~\cref{defK}, we require a quasi-interpolation operator $\mathcal I_H:V \to V_H$ satisfying the approximation and stability properties stated in \cref{eq:propIH}. 
In the two-dimensional case, a possible choice of $\mathcal I_H$ is obtained by prescribing its nodal values at each interior node $z$ via
\begin{equation}\label{eq:IHdef}
(\mathcal I_H v)(z)
\coloneqq  
\begin{bmatrix}
n_{F_1}^1 & n_{F_1}^2\\
n_{F_2}^1 & n_{F_2}^2
\end{bmatrix}^{-1}
\begin{bmatrix}
|F_1|^{-1}\int_{F_1} v \cdot n_{F_1}\ds \\
|F_2|^{-1}\int_{F_2} v \cdot n_{F_2}\ds
\end{bmatrix},
\end{equation}
where $F_1$ and $F_2$ are two faces adjacent to $z$ such that their corresponding unit normal vectors $n_{F_1} = (n_{F_1}^1,n_{F_1}^2)$ and $n_{F_2}= (n_{F_2}^1,n_{F_2}^2)$ are linearly independent. For boundary nodes, an analogous construction is applied, yielding a discrete function~$v_H$ that does not, in general, satisfy the tangential boundary conditions of the space $V$. Conformity with respect to the space $V$ is then enforced by a suitable post-processing step: namely, the nodal values of $v_H$ at the boundary are modified so as to eliminate its tangential component $v_H - (v_H\cdot n)n$. In the simple case of a square domain, this procedure reduces to setting one component of the nodal boundary values of~$v_H$ to zero.
Construction \cref{eq:IHdef}, together with the post-processing step to enforce conformity with the space $V$, extends naturally to three dimensions by selecting, for each interior node $z$, three adjacent faces whose normal vectors are linearly independent. The stability and approximation properties stated in \cref{eq:propIH} can be verified by following the arguments in \cite[Ch.~1.6]{ErG04}.

\subsection{Fine-scale discretization}
To compute the basis functions of the proposed multiscale method, the problems 
\cref{eq:defKTell,eq:defQTell}, associated with the operators $\mathcal{K}_T^\ell$ and $\mathcal{Q}_T^\ell$, respectively, must be solved. 
Although these problems are local, they remain infinite-dimensional and therefore require a fine-scale discretization. 
To this end, we introduce a fine mesh $\mathcal{T}_h$ obtained by refining the coarse mesh~$\mathcal{T}_H$. 
The fine mesh must be sufficiently fine to resolve all microscopic features of the coefficients.  
The problems \cref{eq:defKTell,eq:defQTell} are then solved on local $\mathcal{P}^1$-finite element spaces defined on submeshes of $\mathcal{T}_h$.  
In the fully discrete convergence analysis, the continuous space $V$ is replaced by the fine-scale finite element space, and most arguments carry over directly; see, e.g., \cite[Ch.~4.4]{Mlqvist2020}. 
This yields an a priori error estimate for the fully discrete LOD approximation with respect to the fine-scale finite element solution, analogous to \cref{thm:convergenceloc}. 
An estimate with respect to the solution of the original PDE then follows by applying the triangle inequality together with standard finite element approximation results.

\subsection{Numerical experiments}

\begin{figure}
    \centering
        \includegraphics[width = .4\linewidth,height=0.365\linewidth]{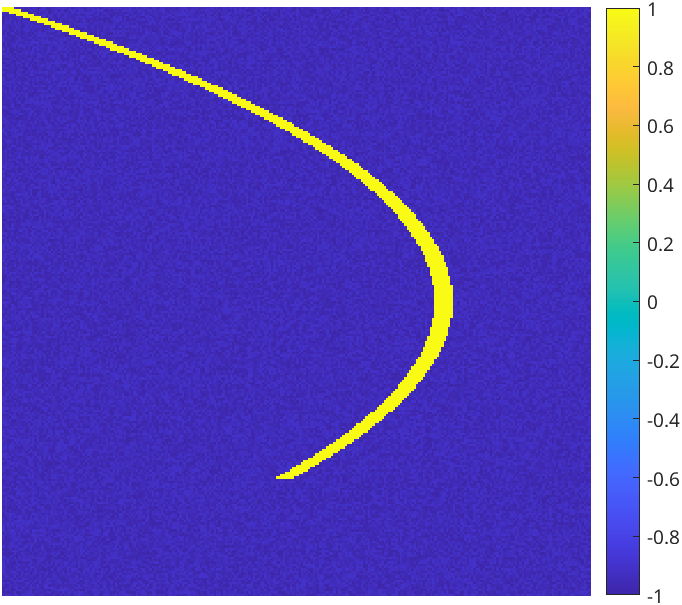}
       \hspace{.5cm}
 \includegraphics[width = .4\linewidth,height=0.365\linewidth]{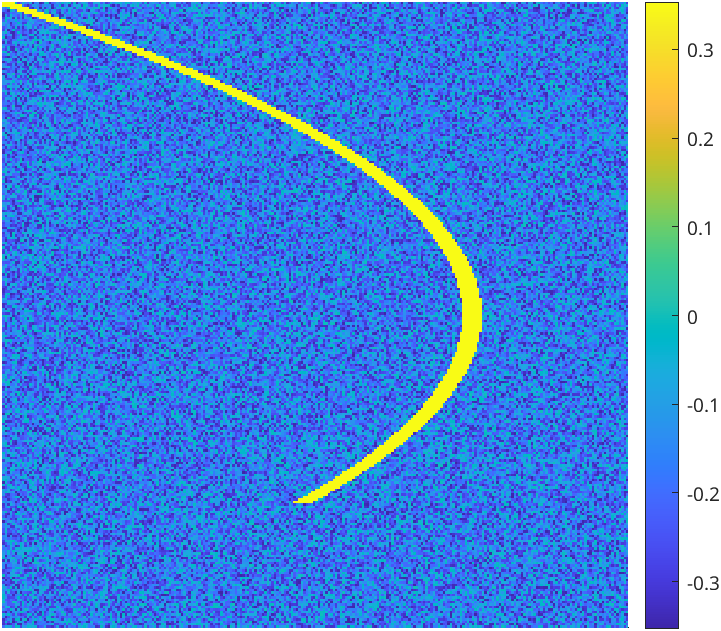}
	\caption{Illustration of the multiscale coefficients $a_{12}$ (left) and $b_1$ (right) used in the numerical experiment.}\label{fig:coeff}
\end{figure}

For the numerical experiments, we consider the domain $\Omega := (0,1)^2$ and choose the stabilization parameter $\sigma \coloneqq  1$ in~\eqref{bilin form a}. 
In this setting, the Poincaré constant~\cref{Cp} for the space $V$ is given by $C_\mathrm{P} = \pi$. 
Therefore, the admissible range of $\delta$ in the Cordes-type condition \cref{Cordes} is $\delta \in (\frac{1}{1+\pi^2}, 1]$.

We choose the data of the numerical experiment as
\begin{align*}
	A(x,y) &\coloneqq \begin{pmatrix}
		\tfrac{\sqrt{11}}{4} & a_{12}(x,y) \\
		a_{12}(x,y) & \tfrac{3\sqrt{11}}{4}
	\end{pmatrix}, \quad 
	b(x,y) \coloneqq \begin{pmatrix}
		b_1(x,y) \\ 1
	\end{pmatrix}, \quad 
	f(x,y) \coloneqq \cos(\pi x) y^3
\end{align*}
for $(x,y)\in \Omega$, where $a_{12}$ is a realization of a background random field that is piecewise constant on a Cartesian mesh with mesh size $\epsilon = {2^{-8}}$, with independent and identically distributed values in the interval $[-1,-0.9]$, augmented by a channel taking the constant value~$1$. This channel consists of the elements whose midpoints have a distance less than $4\epsilon$ from a parabola, the corresponding element values are set to~1. Analogously, we define $b_1$, with realizations in the interval $[-1/{\sqrt{8}},0]$, augmented by a channel taking the value $1/{\sqrt{8}}$. For an illustration of the multiscale coefficients $a_{12}$ and $b_1$, we refer to  \cref{fig:coeff}. 
It is quickly checked that the pair~$(A,b)$ satisfies the Cordes-type condition \cref{Cordes} with $\delta = \frac{1}{10} \in \big(\tfrac{1}{1+\pi^2},1\big]$.

For the multiscale approximation, we construct a hierarchy of meshes by uniformly refining the initial mesh shown in \cref{fig:nonpp} (left). 
For simplicity, we denote the meshes in this hierarchy by $\mathcal{T}_{2^0}, \mathcal{T}_{2^{-1}}, \dots, \mathcal{T}_{2^{-6}}$, where the subscript indicates the side length of the squares formed by joining opposing triangles. 
For the fine-scale discretization, we use the mesh $\mathcal{T}_{2^{-9}}$, and the corresponding fine-scale approximation is denoted by $z_h$. 
In the following, we study the approximation errors of the fine-scale discretized counterparts of the original and post-processed multiscale approximations defined in \cref{eq:locmethod,eq:postprocessed}, denoted by $\tilde z_{H,h}^\ell$ and $\hat z_{H,h}^\ell$, respectively.  
Specifically, we consider the energy errors
\begin{align*}
	\mathrm{err}_{\fraka}(H,\ell) &\coloneqq \|z_h - \tilde z_{H,h}^\ell\|_\fraka, \qquad 
	\mathrm{err}_{\fraka}^\mathrm{pp}(H,\ell)\coloneqq \|z_h - \hat z_{H,h}^\ell\|_\fraka,
\end{align*}
where $\|\cdot\|_\fraka$ denotes the energy norm induced by the bilinear form $\fraka$. We recall that, in view of \cref{Lmm: char of grad u}, the energy norm is equivalent to the norm $\|D(\cdot)\|_{L^2}$.

For the multiscale approximation without post-processing, as defined in \cref{eq:locmethod}, \cref{fig:nonpp} shows that the error decreases with mesh refinement, but the convergence is very slow, with estimated convergence rates well below one. 
This behavior is due to the low regularity of the right-hand side of the variational problem \cref{z prob}, which lies only in the dual space $V^*$. 
Consequently, no rigorous convergence rates can be extracted; see \cref{Rk: nec corr}.

However, post-processing as introduced in \cref{eq:postprocessed} changes the picture completely. 
This post-processing allows one to extract convergence rates from the right-hand side despite the low regularity of the corresponding functional. 
This is clearly visible in \cref{fig:pp}, where, for sufficiently large oversampling parameters, second- and third-order convergence is observed for the post-processed approximations with $p = 1$ and $p = 2$, respectively. For the post-processing, we choose the operator $\Pi_H$ in \cref{eq:approxf} as the nodal interpolation onto first-order ($p = 1$) and second-order ($p = 2$) Lagrange finite element spaces defined on the mesh $\mathcal{T}_H$. These numerical observations are in line with the theoretical predictions from \cref{thm:convergenceloc}. 
Note that, for fixed oversampling parameters $\ell$, the errors initially decrease with optimal order but eventually reach a plateau, the value of which depends on the specific choice of~$\ell$. 
This behavior is consistent with the theoretical prediction in \cref{thm:convergenceloc} and contrasts with the results in \cite{FGP24}, where, for fixed oversampling parameters, the error initially decreases but then increases sharply. 
The increase is caused by a negative power of $H$ multiplying the (exponentially decaying) localization error in the error estimate. Here, this issue is avoided by adapting the localization strategy proposed in \cite{Hauck2026}.

We emphasize that, by employing this new post-processing, convergence can be shown in an $H^2$-type norm of the solution $u$ which was not possible in previous work; see \cite{FGP24}.

\begin{figure}
    \centering
    
    \includegraphics[height=0.425\linewidth]{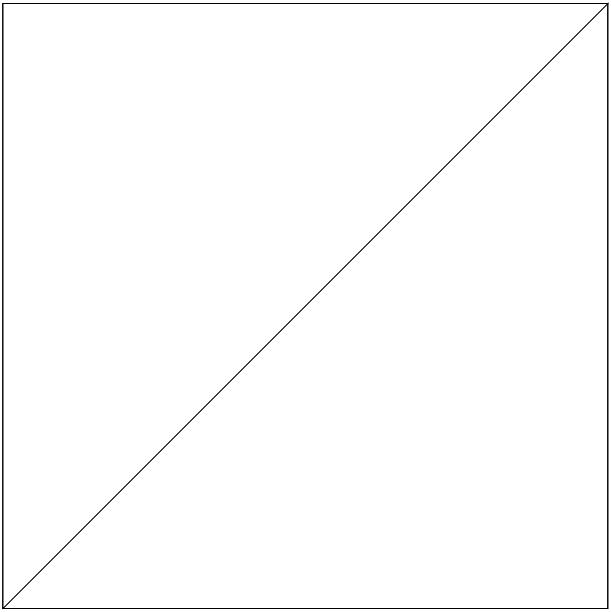}
           \hspace{.5cm}
\includegraphics[width=0.45\linewidth]{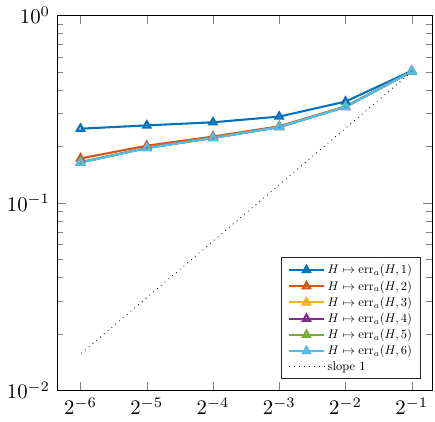}
\caption{Initial mesh for the mesh generation (left). Energy errors of the multiscale approximation $\tilde z_{H,h}^\ell$ (without post-processing) for different choices of the oversampling parameter $\ell$, plotted as a function of the coarse mesh size $H$ (right).}
    \label{fig:nonpp}
\end{figure}
\begin{figure}
    \centering
    \includegraphics[width=0.45\linewidth]{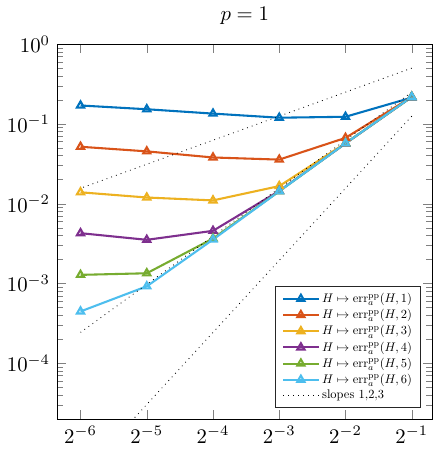}\hfill
\includegraphics[width=0.45\linewidth]{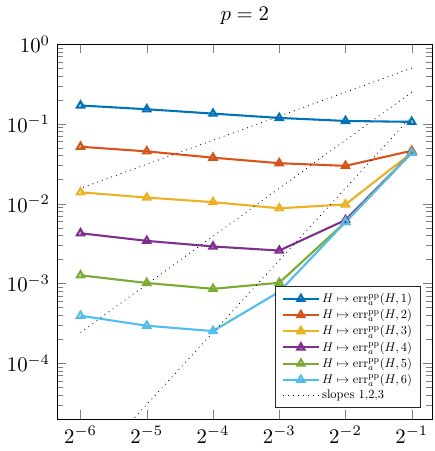}
\caption{Energy errors of the post-processed multiscale approximation $\hat z_{H,h}^\ell$ for different choices of the oversampling parameter $\ell$, plotted as a function of the coarse mesh size $H$. Two choices for the polynomial degree used for the post-processing (cf.~\cref{eq:idealmod} and \cref{eq:postprocessed}) are considered: $p = 1$ (left) and $p = 2$ (right).} \label{fig:pp}
\end{figure}

\subsection*{Acknowledgments}
The authors are grateful to D.~Gallistl (Friedrich-Schiller-Universit\"{a}t Jena) for providing a monoscale code that served as the foundation for the implementation developed in this work. M.~Hauck and R.~Maier acknowledge funding from the Deut\-sche Forschungsgemeinschaft (DFG, German Research Foundation) -- Project-ID 258734477 -- SFB 1173. 
Parts of this work were conducted during M.~Hauck's and R.~Maier's stay at the
Hausdorff Research Institute for Mathematics funded by the Deutsche Forschungsgemeinschaft (DFG, German Research Foundation) under Germany's Excellence Strategy – EXC-2047/2 – 390685813.

\bibliographystyle{alpha}
\bibliography{references}

\end{document}